\documentclass[11pt]{article}

\setlength{\textwidth}{15.2cm} \setlength{\hoffset}{-1.5cm}
\setlength{\textheight}{24cm} \setlength{\voffset}{-2cm}

\usepackage{amsmath,amsthm,amssymb,amsxtra,amsfonts,amsbsy,mathrsfs}
\usepackage[all,2cell]{xy} \UseAllTwocells
\SilentMatrices
\newtheorem{thm}[subsection]{Theorem}
\newtheorem{cor}[subsection]{Corollary}
\newtheorem{lemma}[subsection]{Lemma}
\newtheorem{prop}[subsection]{Proposition}
\newtheorem{sublemma}[subsubsection]{Lemma}

\newtheorem{subcor}[subsubsection]{Corollary}
\newtheorem{subthm}[subsubsection]{Theorem}
\theoremstyle{definition}
\newtheorem{notation-convention}[subsection]{Notations and
Conventions}
\newtheorem{defn}[subsection]{Definition}

\newtheorem{remark}[subsection]{Remark}

\newtheorem{anitem}[subsubsection]{}
\newtheorem{blank}[subsection]{}

\numberwithin{equation}{subsection}

\newcommand{\leftexp}[2]{{\vphantom{#2}}^{#1}{#2}}

\newcommand{\s}{\mathscr}
\newcommand{\bb}{\mathbb}

\newcommand{\ol}{\overline}
\newcommand{\Qlb}{\overline{\mathbb Q}_{\ell}}
\newcommand{\Z}{\bb Z}
\newcommand{\Q}{\bb Q}
\newcommand{\C}{\bb C}

\newcommand{\Gal}{\mathrm{Gal}}
\newcommand{\Hom}{\mathrm{Hom}}
\newcommand{\Spec}{\mathrm{Spec}}

\newcommand{\IC}{\mathrm{IC}}

\newcommand{\Fr}{\mathrm{Frob}}

\newcommand{\Tr}{\mathrm{Tr}}
\newcommand{\red}{\mathrm{red}}

\newcommand{\Perv}{\text{Perv}}

\begin{document}

\title{Independence of $\ell$ for the supports in
the Decomposition Theorem}
\author{Shenghao Sun\thanks{Yau Mathematical Sciences Center, Tsinghua University, Beijing 100084, China;
email: \texttt{shsun@math.tsinghua.edu.cn}. Partially supported by China NSF grant (11531007).}}
\date{}
\maketitle

\begin{abstract}
In this note, we prove a result on the independence of $\ell$
for the supports of irreducible perverse sheaves occurring in
the Decomposition Theorem, as well as for the family of local
systems on each support.
It generalizes Gabber's result on the independence
of $\ell$ of intersection cohomology to the relative case.
\end{abstract}

\section{Introduction}

We work with \'etale cohomology theory (or, more accurately, $\ell$-adic cohomology) of schemes here.
Let $f:X\to Y$ be a proper morphism of algebraic varieties over a
separably closed field $k$, and let $\IC_X(\Qlb)$ be the $\ell$-adic intersection complex on $X$. By the decomposition theorem \cite[Th\'eor\`eme 6.2.5]{BBD}, $Rf_*\IC_X(\Qlb)$ splits into a
direct sum of shifted semisimple perverse sheaves. (Although the theorem cited above is stated for $k=\C$, the proof works in general.) It is then
natural to expect that the supports of the irreducible perverse
sheaves occurring in the decomposition are independent of
$\ell\ne p$. This is clear if $\text{char }k=0$, by
reducing to the case where $k=\C$ and comparing with $\C$-coefficients.

The question can be raised from another perspective, and made more general. One has the following conjecture on ``independence of $\ell$" under pushforwards. Given an $\bb F_q$-morphism $f:X\to Y$ between $\bb F_q$-schemes of finite type, and given a compatible system $\{\s F_i\}_I$ of irreducible lisse sheaves on $X$ (see Definition \ref{D1} (i) below for definition), one predicts that for each integer $n$, the family of constructible sheaves $\{R^nf_*\s F_i\}_I$ are lisse on the strata of a common stratification of $Y$ and that on each stratum, their restrictions are compatible; similarly for the other operations. Note that the assumption on lissity of $\s F_i$ is necessary, since a lisse sheaf (which may have no global sections) could be compatible with the direct sum of a punctual sheaf (which always have global sections) and another constructible sheaf; similarly, the assumption on irreducibility rules out for instance nontrivial extensions of constant sheaves. Some results in this direction are given by Illusie \cite{Ill}, who showed, in particular, that there is a stratification $\{Y_i\}_i$ of $Y$, such that all the sheaves $R^nf_*\Z_{\ell}$ (as well as $R^nf_!\Z_{\ell}$), as $n$ and $\ell\ne p$ vary, are lisse on each $Y_i$ (see \textit{loc.\ cit.}, Corollaire 2.7).

Our main result Theorem \ref{T1} could be regarded as a variant of this conjecture with respect to the perverse $t$-structure; namely, we prove that if $f$ is proper, then for each $n$, the family of semisimplified perverse sheaves $\leftexp{p}{R^nf_*}(\s F_i)^{\text{ss}}$ is ``independent of $i$": the supports of their irreducible constituents are the same, and the family of local systems on each support is compatible. As a corollary, we have

\begin{thm}\label{T-main}
For a proper morphism $f:X\to Y$ of $\ol{\bb F}_p$-schemes of finite type, the set of supports occurring in the
decomposition of $Rf_*\IC_X(\Qlb)$ is
independent of $\ell$. Moreover, as $\ell\ne p$ varies,
the local systems occurring in the decomposition over each stratum
form a compatible system \emph{(}with respect to some model defined over a finite subfield of $\ol{\bb F}_p$\emph{)}.
\end{thm}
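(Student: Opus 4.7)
The plan is to deduce Theorem~\ref{T-main} from Theorem~\ref{T1} by means of a de Jong alteration together with induction on $\dim X$.

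First, by standard spreading-out, I would pick a finite subfield $\bb F_q \subset k$ and a model $f_0: X_0 \to Y_0$ of $f$ over $\bb F_q$. All the data in the statement --- the intersection complex, the perverse summands of $Rf_{0,*}IC_{X_0}(\Qlb)$ together with their supports, and the local systems on each support --- descend to such a model after possibly enlarging $q$, reducing the claim to an assertion over $\bb F_q$, which is the setting in which Theorem~\ref{T1} applies. Treating each irreducible component separately, one may further assume $X$ is irreducible.

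Second, by de Jong's theorem there is a proper surjective generically étale alteration $\pi_0: X'_0 \to X_0$ with $X'_0$ smooth and irreducible of dimension $d := \dim X$. The constant sheaves $\{\Qlb_{X',\ell}\}_\ell$ form a compatible system of irreducible lisse sheaves on $X'_0$. Since $\pi_0$ is generically étale, the decomposition theorem exhibits $IC_{X_0}(\Qlb_\ell)$ as a direct summand of $R\pi_{0,*}\Qlb_{X',\ell}[d]$ for every $\ell\ne p$; the remaining summands are of the form $IC_{Z_\alpha}(\s L_{\alpha,\ell})[n_\alpha]$ with $Z_\alpha \subsetneq X_0$ proper closed subvarieties and $\s L_{\alpha,\ell}$ lisse local systems on open subsets of $Z_\alpha$. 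Applying Theorem~\ref{T1} to $g_0 := f_0 \circ \pi_0$ with the compatible system $\{\Qlb_{X',\ell}\}$ then shows that the decomposition of $Rg_{0,*}\Qlb_{X',\ell}[d]$ has supports and local systems independent of $\ell$.

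To finish, I would induct on $d=\dim X$; the case $d=0$ is immediate. For the inductive step, applied to $f_0|_{Z_\alpha}$ with input $\{IC_{Z_\alpha}(\s L_{\alpha,\ell})\}$, each term $Rf_{0,*}IC_{Z_\alpha}(\s L_{\alpha,\ell})[n_\alpha]$ has $\ell$-independent decomposition. Subtracting these contributions from the $\ell$-independent total $Rg_{0,*}\Qlb_{X',\ell}[d]$ leaves $Rf_{0,*}IC_{X_0}(\Qlb_\ell)$ with an $\ell$-independent decomposition, as claimed.

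The principal obstacle is closing the induction: the local systems $\s L_{\alpha,\ell}$ on the $Z_\alpha$ are produced by the $\ell$-adic BBD decomposition, and it is not \emph{a priori} clear that, for varying $\ell$, they assemble into a compatible system --- which is what the inductive hypothesis requires. One therefore has to strengthen Theorem~\ref{T-main} to take $IC_Z(\s L_\ell)$ of any compatible lisse system $\{\s L_\ell\}$ on an open of a closed subvariety $Z$ as input, and verify that the BBD components in the decomposition of $R\pi_{0,*}\Qlb_{X',\ell}[d]$ do form such a compatible system. I expect this verification to follow from the proof of Theorem~\ref{T1} itself, by matching Frobenius traces on the compatible system $\{Rg_{0,*}\Qlb_{X',\ell}[d]\}$ across $\ell$ and invoking Chebotarev density to conclude compatibility of the geometrically irreducible constituents.
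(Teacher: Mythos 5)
The plan is more elaborate than it needs to be, and the difficulty you flag at the end is a symptom of having taken a detour. The simpler observation, which the paper uses, is that the family $\{IC_{X_0}(\Qlb_{\ell})\}_{\ell\ne p}$ is \emph{already} a strongly perverse $(\bb Q,I)$-compatible system of perverse semisimple complexes: for an irreducible model $X_0$, take a smooth open dense $U\subset X_0$; the constant sheaves $\Qlb_{\ell,U}$ are irreducible lisse and tautologically form a weakly compatible system (with $E=\bb Q$), and $IC_{X_0}(\Qlb_\ell)$ is by construction the intermediate extension of this system, so Definition~\ref{D1}(iii) is satisfied. The reduction carried out in the first paragraph of the proof of Theorem~\ref{T1} (pass to $\s F_i=IC_{\ol U}(L_i)$ with $\{L_i\}$ compatible) shows precisely that this is the form to which the theorem ultimately reduces anyway. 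So one applies Theorem~\ref{T1} directly to $f_0$ and $\{IC_{X_0}(\Qlb_\ell)\}_\ell$, with no alteration, no induction on dimension, and no subtraction of lower-dimensional contributions.

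What remains after invoking Theorem~\ref{T1} is the descent from the finite-field statement to the statement over $k=\bb F$, and your sketch glosses over this. The issue is that the supports $Y_{\alpha,0}$ and the local systems $L^{\ell}_{\alpha\beta,0}$ produced by Theorem~\ref{T1} live over $\bb F_q$, and one has to check that after base change to $\bb F$ they still account for the decomposition of $Rf_*IC_X(\Qlb)$ with the asserted $\ell$-independence. This requires passing to a finite extension $\bb F_{q^v}$ where all $Y_{\alpha,0}$ are geometrically connected and all $L^{\ell}_{\alpha\beta,0}$ are geometrically irreducible --- and, crucially, showing that the needed $v$ can be chosen \emph{uniformly} in $\ell$. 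That argument (comparing splitting behaviour across $\ell$ via the count of $\beta$-indices in Proposition~\ref{L1}) is the content of the proof of Corollary~\ref{C1} and is the substantive step your proposal does not address.

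Concerning the gap you explicitly raise: you are right to worry about it, and it is a genuine problem for the structure of your induction. Within your setup it could in fact be repaired --- apply Theorem~\ref{T1} once to $\pi_0$ itself to conclude that $\{R\pi_{0,*}\Qlb_{X',\ell}[d]\}_\ell$ is strongly perverse compatible, which gives precisely the compatibility of $\{\s L_{\alpha,\ell}\}_\ell$ that your inductive hypothesis needs. But this patch makes it clear that the alteration was never buying you anything: the compatibility you want is already present in $\{IC_{X_0}(\Qlb_\ell)\}_\ell$ without it.
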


This can be regarded as the
relative version of a result of Gabber \cite{Fuj}.

\begin{blank}\label{notation}
We fix some notations and conventions.

\begin{anitem}\label{cd_l}
We work over a finite field $\bb F_q$ of characteristic $p$. Here $``\ell"$, as well as $``\ell_i"$ in the sequel, denote prime numbers different from $p$. Let $\bb F$ be a fixed algebraic closure of $\bb F_q$.
We always assume $k$-schemes to be of finite type over their base field $k$. We only work with \textit{middle} perversity \cite[Section 4]{BBD}.
\end{anitem}

\begin{anitem}\label{ess-smooth}
Let $X$ be a $k$-scheme. We say that $X$ is
\textit{essentially smooth} if $(X_{\ol{k}})_{\red}$ is
smooth over $\ol{k}$. When $k$ is perfect (e.g.\ a finite field), this is equivalent to $X_{\red}$ being regular. A $\Qlb$-lisse sheaf is sometimes called an \textit{$\ell$-adic local system}.
\end{anitem}

\begin{anitem}\label{IC}
Let $X$ be a $k$-scheme, and let $Z$ be an
essentially smooth irreducible locally closed subscheme of
$X$. Let $j:Z\to\ol{Z}$ be the open immersion of $Z$ into
its closure in $X$. Let $L$ be an $\ell$-adic lisse sheaf on $Z$.
We will denote the intermediate extension $j_{!*}L[\dim Z]$
(see \cite[p.\ 58]{BBD}), or its extension-by-zero to $X$,
by $\IC_{\ol{Z}}(L)$, and call it the \textit{intersection
complex defined by the local system} $L$.

We also denote the closure of $Z$ in $X$ by $Cl_X(Z)$, when there is a possible confusion (e.g.\ in Lemma \ref{L0-middle-ext}).
\end{anitem}

\begin{anitem}\label{max-support}
Let $X$ be an irreducible $k$-scheme, $j:U\to X$
an essentially smooth open subset of $X$, and let $L$ be
an $\ell$-adic local system on $U$. We say that $U$ is the
\textit{maximal support} of $L$, if for any essentially
smooth open subset $V$ of $X$, such that the local system
$L|_{U\cap V}$ can be extended to a local system on $V$, we
have $V\subset U$. Given $L$ on $U$, its maximal support
always exists: it is the union of all essentially smooth open sets of $X$ over which the sheaf $j_*L$ is lisse.
\end{anitem}

\begin{anitem}\label{closed-supp}
Let $X$ be a $k$-scheme and let $P$ be an $\ell$-adic perverse sheaf on $X$, with irreducible constituents
$\IC_{\ol{X}_{\alpha}}(L_{\alpha})$, where for each $\alpha,\ L_{\alpha}$ is an irreducible local system with maximal support $X_{\alpha}\subset\ol{X}_{\alpha}$. Then we say that
$X_{\alpha}$ (resp.\ $\ol{X}_{\alpha}$) is a \textit{support}
(resp.\ a \textit{closed support}) \textit{occurring in} $P$.
\end{anitem}

\begin{anitem}\label{K-gp}
The Grothendieck group $K(X,\Qlb)$ of $D_c^b(X,\Qlb)$, which is also the Grothendieck group of $\text{Perv}(X,\Qlb)$, is isomorphic to the free abelian group generated by the isomorphism classes of irreducible perverse sheaves on $X$ (see \cite[0.8]{Lau}). This is the case for any noetherian and artinian abelian category.

For $K\in D^b_c(X,\Qlb)$, let $[K]$ be its image in the Grothendieck group. Then we have
\[
[K]=\sum_n(-1)^n\,[\leftexp{p}{\s H^nK}].
\]
For the given $K\in D^b_c(X,\Qlb)$, there exist two
semisimple perverse sheaves $P^+$ and $P^-$ without common
irreducible factors, such that
\[
[K]=[P^+]-[P^-],
\]
and they are unique up to isomorphism. We shall call it the
\textit{canonical representative} of $[K]$. It is obtained from the identity above after cancellation.
\end{anitem}

\begin{anitem}\label{Weil-cplx}
When $k$ is a finite field, we work with \textit{Weil sheaves} \cite[D\'efinition 1.1.10]{Del2} and \textit{Weil complexes} \cite[2.4.2]{Sun1} in this article. The category of bounded $\ell$-adic Weil complexes on $X$ over $\bb F_q$ is denoted $W^b(X,\Qlb)$. Although this is not a triangulated category,
one can still talk about exact triangles in it: they are diagrams
\[
\xymatrix@C=.5cm{
(K_1,\varphi_1) \ar[r] & (K_2,\varphi_2) \ar[r] & (K_3,\varphi_3) \ar[r] & (K_1[1],\varphi_1[1])}
\]
such that $K_1\to K_2\to K_3\to K_1[1]$ is an exact triangle in $D^b_c(X_{\bb F},\Qlb)$.
Perverse sheaves are understood in this setting, and the notions in (\ref{max-support},\ \ref{closed-supp},\ \ref{K-gp}) above generalize to perverse Weil sheaves $\Perv^W(X,\Qlb)$. For instance, the Grothendieck group $K^W(X,\Qlb)$ is the free abelian group generated by isomorphism classes of irreducible perverse Weil sheaves.
Let $W^b_m(X,\Qlb)$ be the full subcategory of $W^b(X,\Qlb)$ consisting of mixed Weil complexes.
\end{anitem}

\begin{anitem}\label{char_poly}
Let $X$ be an $\bb F_q$-scheme, and $x\in X(\bb F_{q^v})$ (or $x\in |X|$, namely a closed point). Let $\s F$ be a $\Qlb$-sheaf on $X$. We denote by $P_x^{\s F}(t)$ the polynomial \cite[1.1.8]{Del2}
\[
\det(\textbf{1}-\Fr_x\cdot t,\s F)\in\Qlb[t],
\]
and define the \textit{$L$-function} $L(\s F,t)$ to be
\[
L(\s F,t)=\prod_{x\in|X|}\ \frac{1}{P_x^{\s F}(t^{\deg(x)})},
\]
where $\deg(x)=[k(x):\bb F_q]$ is the degree of $x$.
Both extend to $\s F\in W^b(X,\Qlb)$:
\[
P_x^{\s F}(t):=\prod_{i\in\Z}\ P_x^{\s H^i(\s F)}(t)^{(-1)^i}\in\Qlb(t),
\]
and depends only on the image $[\s F]$ of $\s F$ in the Grothendieck group of $\ell$-adic Weil sheaves.
\end{anitem}

\begin{anitem}\label{Tate-twist}
Given $b\in\Qlb^\times$, denote by $\Qlb^{(b)}$ the Weil sheaf on $\Spec\ \bb F_q$ of rank 1 on which the geometric Frobenius acts as multiplication by $b$. Given an $\bb F_q$-scheme $X$ with structural map $a:X\to\Spec\ \bb F_q$, and an $\ell$-adic Weil sheaf $\s F$ on $X$, let $\s F^{(b)}$ be $\s F\otimes a^*\Qlb^{(b)}$, called a \textit{Tate twist deduced from $\s F$}. (See \cite[1.2.7]{Del2}, but be aware of the difference that, to simplify the notation, we work over a fixed finite field $\bb F_q$, rather than over $\bb F_p$.)
\end{anitem}

\begin{anitem}\label{anitem-stack}
For the perverse $t$-structure on $k$-Artin stacks (always assumed of finite presentation), see \cite{LO3}; for the $L$-functions of $\bb F_q$-Artin stacks, see \cite[Definition 4.1]{Sun1}. The other notions above then all generalize without much difficulty to Artin stacks.
\end{anitem}
\end{blank}

\textbf{Acknowledgement.} I would like to thank Ofer Gabber, Martin Olsson and Weizhe Zheng for helpful discussions. In particular, Weizhe Zheng showed me how to generalize the result to algebraic stacks. The referees pointed out some mistakes in early versions, and gave valuable comments helping to improve the readability of the article greatly; in particular, the necessity of proving a \v{C}ebotarev density theorem for Weil sheaves (Lemma \ref{L-Chebotarev}) was pointed out by one of the referees. I appreciate them all.
Part of this work was done during the stay in
Universit\'e Paris-Sud (UMR 8628), supported by ANR grant
G-FIB, and in IH\'ES. This work is also partially supported by China NSF grant (11531007).

\section{Perverse compatible systems}

In this section, we review the notion of a compatible system and define its variant for the perverse $t$-structure, called a \emph{a perverse compatible system}, and show their relations.

Let $E$ be a number field, and let $I$ be a set of pairs
$i=(\ell_i,\sigma_i),$ where $\ell_i$ is a rational prime
number not equal to $p$, and $\sigma_i:E\hookrightarrow\ol{\bb
Q}_{\ell_i}$ is an embedding of fields.
Note that, to give an embedding $E\to\Qlb$ is the same as giving a finite place $\lambda$ of $E$ over $\ell$.

Part (i) in the following definition, at least for $K_i$ sheaves,
is well-known; see for instance
\cite[Chapter I, Section 2.3]{Serre} for number fields.

\begin{defn}\label{D1}
Let $X$ be an $\bb F_q$-scheme, and for
each $i\in I,$ let $K_i\in W^b(X,\ol{\bb Q}_{\ell_i})$.

(i) We say that $\{K_i\}_I$ is a \emph{weakly $(E,I)$-compatible system}, if for every integer $v\ge1$ and for every point $x\in X(\bb F_{q^v})$, there exists a number $t_x\in E$ such that
\[
\sigma_i(t_x)=\Tr(\Fr_x,K_i)
\]
for all $i\in I$.

(ii) We say that $\{K_i\}_I$ is a \emph{strongly $(E,I)$-compatible system}, if for each $n\in\bb Z,$ the system of cohomology sheaves $\{\s H^nK_i\}_I$ is a weakly $(E,I)$-compatible system.

(iii) Assume that $K_i=P_i$ are perverse sheaves. We say that
$\{P_i\}_I$ is \emph{perverse $(E,I)$-compatible}, if there exist a finite number of essentially smooth irreducible locally closed subschemes $X_{\alpha}\hookrightarrow X$, and for each $\alpha$ a weakly $(E,I)$-compatible system $\{L^i_{\alpha}\}_I$ of semisimple local systems on $X_{\alpha}$, such that each irreducible factor of $L_\alpha^i$ has $X_{\alpha}$ as its maximal support (inside $\ol{X}_{\alpha}$), and that
\[
P_i^{\text{ss}}\simeq\bigoplus_{\alpha}\ \IC_{\ol{X}_{\alpha}} (L^i_{\alpha})
\]
for all $i$. Here $P_i^{\text{ss}}$ denotes the semi-simplification of $P_i$ in the abelian category of perverse sheaves.

(iv) Once again, let $K_i\in W^b(X,\ol{\bb Q}_{\ell_i})$. We say that $\{K_i\}_I$ is a \emph{weakly perverse $(E,I)$-compatible system} if, letting
\[
[K_i]=[P^+_i]-[P^-_i]
\]
be the canonical representatives (\ref{K-gp}), both $\{P^+_i\}_I$ and
$\{P^-_i\}_I$ are perverse $(E,I)$-compatible.

(v) We say that $\{K_i\}_I$ is a \emph{strongly perverse $(E,I)$-compatible system,} if for each $n\in\bb Z$, the system of perverse cohomology sheaves $\{\leftexp{p}{\s H^n(K_i)}\}_I$ is perverse $(E,I)$-compatible.
\end{defn}

These notions apply to $\bb F_q$-Artin stacks $X$ as well.

\begin{remark}\label{R1}
(i) Basically a ``weak" version of the notion depends only on the
image in the Grothendieck group, so there could be cancellations
between cohomologies of even indices and odd indices, and the weak notion could not see shifts of indices by even integers. A ``strong" version is for the cohomology objects, so cancellations and shifts are not allowed.

(ii) For a system $\{\s F_i\}_I$ of sheaves, the notions (i) and
(ii) in Definition \ref{D1} are the same, and we simply say that $\{\s F_i\}_I$ is an \emph{$(E,I)$-compatible system of sheaves.}

(iii) The condition in Definition \ref{D1} (i) is equivalent to the one given in \cite[1.2]{Fuj}: for each closed point $x\in|X|$, there exists a rational function $P_x(t)\in E(t)$ such that
\[
\sigma_i(P_x(t))=P_x^{K_i}(t),\ \forall i\in I.
\]
This is because formally we have
\[
-t\frac{d}{dt}\log P_x^K(t)=\sum_{r=1}^{\infty} \Tr(\Fr_x^r,K)t^r.
\]
The advantage of our definition is that it applies to Artin stacks as well.

(iv) Note that, in the situation of (\ref{max-support}),
if $L=L_1\oplus\cdots\oplus L_n$ is a direct sum of sub-local systems, then the maximal support of $L$ is the intersection of those of the $L_i$'s; in particular, the maximal support of a direct factor of $L$ could be larger than that of $L$. In Definition \ref{D1} (iii) above, the condition we impose is then different from asking the $L_\alpha^i$'s to have maximal support $X_\alpha$.
\end{remark}

\begin{lemma}\label{L-poids}
Let $X$ be a connected $\bb F_q$-scheme and let $\{\s F_i\}_I$ be an $(E,I)$-compatible system of punctually pure lisse sheaves on $X$; assume $\s F_i\ne0$. Let $w_i\in\Z$ be the weight of $\s F_i$.
Then the numbers $w_i\ (i\in I)$ are all equal.
\end{lemma}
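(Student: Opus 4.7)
\textbf{Proof proposal for Lemma \ref{L-poids}.} The key observation is that although $\sigma_i$ and $\iota_i$ individually depend on $i$, by construction the composite $\iota_i \circ \sigma_i : E \hookrightarrow \bb C$ is the \emph{fixed} embedding of $E$ into $\bb C$, which does not depend on $i$. The plan is to extract $w_i$ from the complex absolute values of Frobenius eigenvalues at a single closed point of $X$, and then to observe that under the composite $\iota_i \circ \sigma_i$ these absolute values cannot move with $i$.

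Concretely, I would fix any closed point $x \in X$ of residue degree $d = \deg(x)$; since $X$ is connected and nonempty, such an $x$ exists. The iterates $\Fr_x^k$ act on $\s F_i$ as the Frobenii attached to the $\bb F_{q^{dk}}$-points lying above $x$, so Definition \ref{D1} (i) supplies elements $t_x^{(k)} \in E$ with $\sigma_i(t_x^{(k)}) = \Tr(\Fr_x^k, \s F_i)$ for every $i \in I$ and every $k \ge 1$. Applying Newton's identities (equivalently, invoking Remark \ref{R1} (iii)), this trace-level compatibility upgrades to a single polynomial $P_x(t) \in E[t]$, independent of $i$, satisfying
\[
\det(1 - \Fr_x t,\ \s F_i) \;=\; \sigma_i(P_x)(t) \;\in\; \ol{\bb Q}_{\ell_i}[t].
\]

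Now apply $\iota_i$: because $\iota_i \circ \sigma_i$ is the preassigned embedding $E \hookrightarrow \bb C$, the polynomial $\tilde P_x(t) := \iota_i(\sigma_i(P_x))(t) \in \bb C[t]$ is literally the same polynomial for every $i \in I$. Its roots in $\bb C$ are precisely the $\iota_i$-images of the reciprocal Frobenius eigenvalues of $\Fr_x$ on $\s F_i$, and punctual $\iota_i$-purity of weight $w_i$ forces each of them to have absolute value $q^{-d w_i/2}$. Under the (implicit) hypothesis that $\s F_i$ is nonzero, $\tilde P_x$ has positive degree — equal to the common rank of the $\s F_i$ — and thus has nonzero roots, whose common complex absolute value is a fixed positive real number. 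Reading $w_i$ off from this number shows that it does not depend on $i$.

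I do not foresee a serious obstacle: once the crucial point that $\iota_i \circ \sigma_i$ is independent of $i$ is noted, the rest is essentially book-keeping. The only minor subtlety is the promotion from compatibility of traces to compatibility of characteristic polynomials, which is handled routinely by the Newton identities.
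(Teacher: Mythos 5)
Your proposal is correct and follows essentially the same route as the paper: fix a closed point, use Remark \ref{R1} (iii) to upgrade trace compatibility to a single characteristic polynomial $P_x(t)\in E[t]$, and then observe that since $\iota_i\circ\sigma_i$ is the fixed embedding $E\hookrightarrow\bb C$, the complex absolute values of its roots, and hence the weights $w_i$, cannot depend on $i$. The only difference is that you spell out the Newton-identities step and the nonvanishing of the roots, which the paper leaves implicit.
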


\begin{proof}
Take any closed point $x\in|X|$; then by Remark \ref{R1} (iii) there is a polynomial $P_x(t)\in E[t]$ such that
\[
\sigma_i(P_x(t))=P_x^{\s F_i}(t),\ \forall i\in I.
\]
The weight of $\s F_i$ is then the weight of any reciprocal root of $P_x(t)$ relative to $q^{\deg(x)}$, with respect to an arbitrary embedding $E\to\C$, hence independent of $i$.
\end{proof}

Following \cite[0.9]{FrobTr}, we give the following definition.

\begin{defn}\label{D2-alg}
For an $\bb F_q$-scheme $X$ and $K\in W^b(X,\Qlb)$, we say that $K$ is \textit{algebraic}, if $\Tr(\Fr_x,K)$ is an algebraic number in $\Qlb$ (i.e.\ algebraic over $\Q$), for any $v\ge1$ and $x\in X(\bb F_{q^v})$.
\end{defn}

Clearly, being algebraic or not depends only on the image $[K]$ of $K$ in the Grothendieck group of Weil sheaves.
If $\{K_i\in W^b(X,\ol{\Q}_{\ell_i})\}_I$ is a weakly $(E,I)$-compatible system on $X$, then each $K_i$ is algebraic (since $E$ consists of algebraic numbers). Mixed complexes are by definition algebraic.

Now we recall the following theorem of Drinfeld, based upon earlier results of Chin and Deligne, which is the key ingredient of this article.

\begin{thm}\label{Drinfeld}
Let $X$ be a smooth connected $\bb F_q$-scheme, and let $\s F$ be an irreducible lisse $\ell$-adic sheaf on $X$, whose determinant has finite order. Then there exists a number field $E$ with an embedding $\sigma:E\hookrightarrow\Qlb$, such that for each point $x\in|X|$, the polynomial $P_x^{\s F}(t)\in\Qlb[t]$
has coefficients in $\sigma(E)$, and that for every finite place
$\lambda$ of $E$ not lying over $p$, there exists a lisse $E_{\lambda}$-sheaf $\s G$ which is compatible with $\s F$.
\end{thm}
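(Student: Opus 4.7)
The plan is to reduce from $X$ of arbitrary dimension to the case of smooth curves, where Lafforgue's theorem on the Langlands correspondence for $GL_n$ over function fields produces the required compatible system, and then to globalize using Wiesend-style principles from curves to varieties.

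First I would settle the curve case. When $\dim X = 1$, $X$ is a smooth connected curve over $\bb F_q$, and the finite-order-determinant hypothesis says that $\s F$ corresponds to a continuous representation $\pi_1(X) \to GL_n(\Qlb)$ of finite-order determinant. By Lafforgue's theorem, this representation matches a cuspidal automorphic representation $\pi$ of $GL_n(\bb A_F)$, where $F = \bb F_q(X)$. The Hecke eigenvalues of $\pi$ at an unramified place $x$ recover the reciprocal roots of $P_x^{\s F}(t)$, which are a priori algebraic over $\bb Q$; by Theorem \ref{th-Deligne} they all lie in a common number field $E \subset \Qlb$. For every other finite place $\lambda$ of $E$ of residue characteristic $\ne p$, the same $\pi$ then produces an $n$-dimensional $\lambda$-adic Galois representation whose Frobenius traces at unramified closed points match those of $\s F$, giving the desired compatible lisse $E_\lambda$-sheaf $\s G$.

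For general smooth connected $X/\bb F_q$ of dimension $\ge 2$, I would employ a Bertini-type argument (such as Poonen's Bertini over finite fields, refined by Gabber) to show that every closed point $x \in X$ lies on a smooth geometrically connected curve $C \hookrightarrow X$ such that $\s F|_C$ remains irreducible with finite-order determinant. Applied to each such $C$, the curve case yields a compatible family on $C$. Using Theorem \ref{th-Deligne}, we may enlarge $E$ once and for all so that the Frobenius traces $\Tr(\Fr_x, \s F)$ for every $x \in |X|$ land in $E$. To promote this trace data, together with the curve-by-curve lisse $E_\lambda$-sheaves, into a genuine lisse $E_\lambda$-sheaf $\s G$ on $X$, I would invoke a theorem of Wiesend (in the sharpened form of Deligne and Drinfeld): a coherent family of lisse $E_\lambda$-sheaves on enough smooth curves $C \subset X$ passing through closed points, matching on their overlaps, glues uniquely to a lisse sheaf on $X$.

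The principal obstacle is this final gluing step. Chebotarev density settles uniqueness and pins down $\s G$ up to semisimplification once its Frobenius traces on $|X|$ are fixed, but producing an honest \'etale sheaf out of a class function on $|X|$ is far more delicate: one must arrange enough curves $C$ on which $\s F|_C$ is irreducible with finite-order determinant (so Lafforgue applies on each), and one must verify that the $E_\lambda$-sheaves on different curves fit together compatibly on their intersections. Engineering these two inputs, rather than invoking Lafforgue per se, is the technical heart of Drinfeld's argument and is where the essential difficulty resides.
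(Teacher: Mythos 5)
Your proposal is correct and is essentially the paper's approach: the paper simply quotes this theorem from \cite[Proposition VII.7]{Laff} (the curve case via the Langlands correspondence) and \cite[Theorem 1.1]{Dri} (the globalization from curves to higher-dimensional $X$ via Deligne's finiteness theorem and the Wiesend-style gluing), which is exactly the two-step strategy you outline. The gluing difficulty you flag is indeed the content of Drinfeld's cited theorem, so no further argument is expected here.
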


This follows from \cite[Th\'eor\`eme 1.6]{FrobTr}, \cite[Theorem 1.1]{Dri} and \cite[Main Theorem]{Chin02}. Note that such a $\s G$ is necessarily unique and irreducible:
it is irreducible by looking at the order of the
$L$-function $L(\s G\otimes\s G^{\vee},t)$ at $t=q^{-\dim X}$
(the irreducibility is also mentioned in the statement of \textit{loc.\ cit.}).
If we know that such a lisse sheaf $\s G$ must be an \'etale sheaf (rather than just a Weil sheaf),
then it is determined, according to the usual \v{C}ebotarev's density, by its local characteristic polynomials
$P_x^{\s G}(t)$. As the polynomial $P_x^{\det(\s G)}(t)$ is determined by $P_x^{\s G}(t)$, we see that
$\det(\s G)$ is compatible with $\det(\s F)$, hence it is also of finite order, therefore $\det(\s G)$, as well as $\s G$ itself
by \cite[Proposition 1.3.14]{Del2}, are \'etale sheaves.
It is sometimes called an \textit{$E_{\lambda}$-companion} of $\s F$.

In the following, for the uniqueness of companions, we need a ``\v{C}ebotarev's density theorem" for semisimple lisse Weil sheaves
on normal $\bb F_q$-schemes. Note that, in contrast to the fundamental group, the local Frobenius elements are not dense
in the Weil group.
One could prove a more general version for the Grothendieck group $K^W(X,\Qlb)$, analogous to the
formulation in \cite[$\S1$]{Fuj} or \cite[Th\'eor\`eme 1.1.2]{Lau}, but for our purpose, we only need this elementary version.

\begin{sublemma}\label{L-Chebotarev}
Let $X$ be an integral normal $\bb F_q$-scheme, and let $\s F_1$ and $\s F_2$ be two semisimple lisse $\ell$-adic sheaves
on $X$. If $P_x^{\s F_1}(t)=P_x^{\s F_2}(t)$ for all $x\in|X|$, then $\s F_1\simeq\s F_2$.
\end{sublemma}

Following \ref{Weil-cplx}, by sheaves we mean Weil sheaves here.

\begin{proof}
Let $\bb F_{q^r}$ be the algebraic closure of $\bb F_q$ in the function field of $X$. Because $X$ is normal, the structural morphism $X\to\Spec\ \bb F_q$ factors through $\Spec\ \bb F_{q^r}$. Replacing $\bb F_q$ by $\bb F_{q^r}$, we may assume that $X$ is geometrically integral. Then $X$ satisfies the hypothesis at the beginning of \cite[Section 1.3]{Del2} (namely, normal and geometrically connected).

Let $\s F_1\simeq L_1\oplus\cdots\oplus L_n$ be the decomposition into irreducible lisse sheaves. By \cite[1.3.6]{Del2}, for each $i=1,\cdots,n$, certain Tate twist $L_i^{(1/b_i)}$ of $L_i$, for some $b_i\in\Qlb^\times$, has determinant of finite order, so it is an \'etale sheaf \cite[Proposition 1.3.14]{Del2}.
Clearly, $L_i^{(u/b_i)}$ is also an \'etale sheaf for any $\ell$-adic unit $u$. If $x\in|X|$, and $\alpha_i$ is an eigenvalue of $\Fr_x$ on $L_i$, then $\alpha_i/b_i^{\deg(x)}$ is an eigenvalue on $L_i^{(1/b_i)}$, which, by \cite[Th\'eor\`eme 1.6]{FrobTr}, is a Weil number of weight 0 and, a fortiori, is an $\ell$-adic unit.
Let $v:\Qlb^\times\to\bb Q$ be the $\ell$-adic valuation on $\Qlb$, normalized such that $v(\ell)=1$. Then $v(\alpha_i)=\deg(x)v(b_i)$; in particular, $v(\alpha_i)$ is independent of the choice of the eigenvalue $\alpha_i$. Let $c_i=\ell^{v(\alpha_i)/\deg(x)}$. Here we have chosen, once and for all, a compatible family of roots $\ell^{1/k}$ of $\ell$ in $\Qlb$ (i.e.\ the $k'$-th power of the $(kk')$-th root is the $k$-th root). Then $b_i/c_i$ is an $\ell$-adic unit, so that $L_i^{(1/c_i)}$ is also an \'etale sheaf. Note that the value of $v(\alpha_i)/\deg(x)$ is also independent of the choice of $x$, and by \cite[Proposition 1.3.14]{Del2}, the number $c_i$,
up to $\ell$-adic units, is the only one such that $L_i^{(1/c_i)}$ is an \'etale sheaf.

Now we group the direct factors $L_i$ according to the values $v(\alpha_i)/\deg(x)$, to obtain
\[
\s F_1\simeq M_1\oplus\cdots\oplus M_r,
\]
where each $M_k$ is the direct sum of those $L_i$'s with the same value of $v(\alpha_i)/\deg(x)$. The polynomials $P_x^{M_k}(t)$ can be read off from $P_x^{\s F_1}(t)$, by looking at the $\ell$-adic valuations of the reciprocal roots of $P_x^{\s F_1}(t)$. Moreover, the Tate twist $M_k^{(1/c_{i_k})}$, where $L_{i_k}$ is a direct factor of $M_k$, is an \'etale sheaf, hence is determined, up to isomorphism, by the knowledge of all the polynomials $P_x^{M_k^{(1/c_{i_k})}}(t),\ x\in|X|$, by the usual \v{C}ebotarev's density theorem. Twisting back, we see that $\s F_1$ is determined by $P_x^{\s F_1}(t),\ x\in|X|$.
\end{proof}

\begin{subcor}\label{C-gal-twist}
Let $X$ be a smooth connected $\bb F_q$-scheme, and let $\s F$ be an algebraic lisse $\Qlb$-sheaf on $X$. Then there exists a number field $E$ with an embedding $\sigma:E\hookrightarrow\Qlb$, containing all the local Frobenius traces of $\s F$, and that for every embedding $\sigma':E\hookrightarrow\ol{\Q}_{\ell'},\ \ell'\ne p$, there exists a unique semisimple lisse $\ol{\Q}_{\ell'}$-sheaf, denoted $\s F^{\sigma'}$, which is compatible with $\s F$.
\end{subcor}

In the sequel, we also call $\s F^{\sigma'}$ the \textit{$\sigma'$-companion} of $\s F$.

\begin{proof}
The uniqueness follows from Lemma \ref{L-Chebotarev}. To see its existence, after semisimplification, we may assume that $\s F$ is semisimple, and let
\[
\s F=L_1\oplus\cdots\oplus L_n
\]
be the decomposition into irreducible lisse sheaves. Then each $L_i$ is algebraic: for each $x\in|X|$, the polynomial $P_x^{L_i}(t)$ divides $P_x^{\s F}(t)$, whose roots are algebraic numbers. This reduces us to the case where $\s F$ is irreducible: one simply takes $E$ to be the composite of all those fields $E_i\subset\Qlb$ corresponding to $L_i$, and takes $\s F^{\sigma'}$ to be $\oplus L_i^{\sigma'}$.

So we assume that $\s F$ is irreducible. Replacing $\bb F_q$ by its algebraic closure in the function field of $X$, we may assume that $X$ is geometrically irreducible. By \cite[1.3.6]{Del2}, certain Tate twist $\s F^{(1/b)}$ of $\s F$, for some $b\in\Qlb^\times$, has determinant of finite order, hence is algebraic, by \cite[Th\'eor\`eme 1.6]{FrobTr}. Therefore, the number $b$ is algebraic. We take $E\overset{\sigma}{\hookrightarrow}\Qlb$ to be the subfield generated by the number $b$ and a subfield for $\s F^{(1/b)}$, given by Theorem \ref{Drinfeld}; it clearly contains all the local Frobenius traces of $\s F$. For any embedding $\sigma':E\hookrightarrow\ol{\Q}_{\ell'}$, with $\s G$ the $\sigma'$-companion of $\s F^{(1/b)}$, again given by Theorem \ref{Drinfeld}, we take $\s F^{\sigma'}$ to be the Tate twist $\s G^{(\sigma'(b))}$ of $\s G$.
\end{proof}

One observes that, if $\s F$ is mixed and lisse, then $\s F^{\sigma'}$ is also mixed, and they have the same weights.

The following lemma shows that, compatible semisimple local systems have the same maximal support.
An \textit{extension} of the pair $(E,I)$ is another pair $(E',I')$ together with a bijection $I\to I'$ of the form
\[
(\ell_i,\sigma_i)\mapsto(\ell_i,\sigma_i'),
\]
where $E'/E$ is a finite extension and for each $i\in I,\ \sigma_i'$ is an extension of $\sigma_i$ to $E'$.
In other words, if we interpret the embeddings $\sigma_i$ (resp.\ $\sigma_i'$) as finite places $\lambda_i$ of $E$ (resp.\ finite places $\lambda_i'$ of $E'$), then we ask $\lambda_i'$ to be a place of $E'$ lying over $\lambda_i$.

\begin{lemma}\label{L0-supp}
Let $X$ be an irreducible $\bb F_q$-scheme, and let $U_1$ and $U_2$ be two essentially smooth open subsets in $X$. For $i=1,2$, let $L_i$ be a semisimple lisse $\ol{\Q}_{\ell_i}$-sheaf on $U_i$, having $U_i$ as maximal support, and let $\sigma_i:E\to\ol{\bb Q}_{\ell_i}$ be an embedding. Let $I=\{(\ell_i,\sigma_i)|i=1,2\}$. If $L_1$ and $L_2$ are $(E,I)$-compatible on some non-empty open set $U\subset U_1\cap U_2$, then $U_1=U_2$.
\end{lemma}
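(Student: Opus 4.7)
The plan is to use Drinfeld's companion theorem to produce an $\ell_1$-adic extension of $L_1$ to $U_1\cup U_2$, and then invoke the maximality of $U_1$ to deduce $U_2\subset U_1$; by symmetry this gives $U_1=U_2$.

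First, after replacing $E$ by a larger number field containing, via $\sigma_1^{-1}$ and $\sigma_2^{-1}$, all Frobenius traces of $L_1$ and $L_2$ on $U_1$ and $U_2$ (legitimate by Theorem \ref{th-Deligne}), I would apply Theorem \ref{Drinfeld} to the irreducible lisse sheaf $L_2$ on $U_2$ to produce an irreducible lisse $\ol{\bb Q}_{\ell_1}$-sheaf $M$ on $U_2$ that is a companion of $L_2$. Drinfeld's theorem furnishes such an $M$ for each finite place of $E$ above $\ell_1$, equivalently for each Galois-orbit of embeddings $E\hookrightarrow\ol{\bb Q}_{\ell_1}$; I would choose the companion corresponding to $\sigma_1$, so that $\Tr(\Fr_x,M)=\sigma_1(t_x)=\Tr(\Fr_x,L_1)$ for every closed point $x\in U$, where $t_x\in E$ is the element from the $(E,I)$-compatibility hypothesis.

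Second, on the connected essentially smooth scheme $U$ the two irreducible lisse $\ol{\bb Q}_{\ell_1}$-sheaves $M|_U$ and $L_1|_U$ have identical Frobenius traces at every closed point, so by Chebotarev density combined with Brauer--Nesbitt---the same reasoning that gives uniqueness of Drinfeld companions in the discussion after Theorem \ref{Drinfeld}---they are isomorphic: $M|_U\simeq L_1|_U$. Since $U$ is a non-empty open of the connected normal scheme $U_1\cap U_2$, the surjection $\pi_1(U)\twoheadrightarrow\pi_1(U_1\cap U_2)$ (SGA 1) lifts this isomorphism uniquely to $M|_{U_1\cap U_2}\simeq L_1|_{U_1\cap U_2}$; gluing $L_1$ on $U_1$ with $M$ on $U_2$ along this isomorphism then yields a lisse $\ol{\bb Q}_{\ell_1}$-sheaf on the essentially smooth open $U_1\cup U_2$ extending $L_1$. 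By the definition of $U_1$ as the maximal support of $L_1$ (see \ref{max-support}), this forces $U_2\subset U_1$, and the symmetric argument gives $U_1\subset U_2$.

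The main obstacle is the bookkeeping of number fields and embeddings in the first step: one must arrange that Drinfeld's construction yields, in the given $\ol{\bb Q}_{\ell_1}$-realization, a companion whose Frobenius traces on $U$ literally coincide with those of $L_1$, rather than with some Galois-conjugate version. Once this matching is secured, the remaining ingredients---Chebotarev density, surjectivity of the \'etale fundamental group under open immersions of normal schemes, and the definition of maximal support---are standard.
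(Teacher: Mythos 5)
Your proposal is correct and follows essentially the same route as the paper's proof: produce a companion via Theorem \ref{Drinfeld}, identify it with the other sheaf on $U$ by Chebotarev/uniqueness, extend across $U_1\cap U_2$ using surjectivity of $\pi_1(U)$ onto the fundamental groups of the larger opens, and conclude by maximality of the support, then symmetrize. The only (immaterial) difference is that you take the $\ell_1$-companion of $L_2$ to get $U_2\subset U_1$ first, whereas the paper takes the $\ell_2$-companion of $L_1$ to get $U_1\subset U_2$ first; the bookkeeping of fields and embeddings you flag is handled in the paper by the same kind of finite extension of $(E,I)$ used in Proposition \ref{L1}.
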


\begin{proof}
After making an extension of $(E,I)$ if necessary, we may assume, by Corollary \ref{C-gal-twist}, that $L_1$ has a $\sigma_2$-companion $M$ on $U_1$, and vice versa for $L_2$.
Then $M|_U$ and $L_2|_U$ are semisimple, so they are both
$\sigma_2$-companions of $L_1|_U$,
and by uniqueness we have $M|_U\simeq L_2|_U$. As $\pi_1(U)$ is mapped onto both $\pi_1(U_1)$ and $\pi_1(U_2)$, we see that $j_*L_2$ is lisse over $U_1\cup U_2$, where $j:U_2\to X$ is the open immersion. So $U_1\subset U_2$, by the maximality of $U_2$. By symmetry we have $U_2\subset U_1$ as well.
\end{proof}

Now we prove two key propositions, relating compatibilities in the natural $t$-structure and the perverse $t$-structure. For Proposition \ref{L1}, the proof, in a large part, focuses on the case when $X$ is integral. To reduce the general case to the integral case, we need Lemma \ref{L0-middle-ext}, whose proof repeats part of the integral case. So we put Lemma \ref{L0-middle-ext} after Proposition \ref{L1}, to reduce the repetition.

\begin{prop}\label{L1}
Let $\{P_i\}_I$ be a system of mixed perverse sheaves on an $\bb F_q$-scheme $X$. If they are weakly $(E,I)$-compatible, then they are perverse $(E,I)$-compatible.
\end{prop}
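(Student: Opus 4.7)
The plan is to fix a reference index $i_0\in I$, decompose $P_{i_0}^{\text{ss}}$ into its $IC$-constituents, transfer each constituent along companions via Drinfeld's theorem, and then identify the resulting candidate with $P_j^{\text{ss}}$ for every $j$ by a trace-uniqueness argument.

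Concretely, I would write $P_{i_0}^{\text{ss}}=\bigoplus_{\alpha}IC_{\ol{X}_{\alpha}}(L^{i_0}_{\alpha})$ with each $L^{i_0}_{\alpha}$ irreducible lisse with maximal support $X_{\alpha}$ (essentially smooth, which reduces to smooth after replacing by the reduction). To apply Theorem \ref{Drinfeld} I need $\det L^{i_0}_{\alpha}$ of finite order; since the geometric determinant is already of finite order by Weil II, this can be arranged by a suitable Tate twist (\ref{Tate-twist}) at the cost of enlarging $E$ to a number field $E'\subset\ol{\bb Q}$ containing the required algebraic scalars (and by Theorem \ref{th-Deligne} all Frobenius traces of the $L^{i_0}_{\alpha}$). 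Fixing extensions $\sigma_i'$ of $\sigma_i$ to $E'$ yields the extension $(E',I')$. For each $j\in I$ and each $\alpha$, Theorem \ref{Drinfeld} then produces an irreducible lisse $\ell_j$-sheaf $L^j_{\alpha}$ on $X_{\alpha}$ compatible with $L^{i_0}_{\alpha}$ via $(\sigma_{i_0}',\sigma_j')$, with $X_{\alpha}$ still the maximal support inside $\ol{X}_{\alpha}$ by Lemma \ref{L0-supp}. Setting
\[
Q_j=\bigoplus_{\alpha} IC_{\ol{X}_{\alpha}}(L^j_{\alpha}),
\]
the families $\{L^j_{\alpha}\}_{j\in I}$ are weakly $(E',I')$-compatible systems of semisimple lisse sheaves with common maximal support $X_{\alpha}$ by construction, so it suffices to show $Q_j\simeq P_j^{\text{ss}}$ for every $j$.

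Both $Q_j$ and $P_j^{\text{ss}}$ are semisimple perverse sheaves, so by the classical fact---going back to Laumon and already used in \cite{BBD}---that a semisimple perverse sheaf over $\bb F_q$ is determined up to isomorphism by its Frobenius trace function, it is enough to verify that they have equal trace functions on $X(\bb F_{q^v})$ for every $v\ge 1$. By additivity over $\alpha$ and the weak compatibility of $\{P_i\}_I$, this reduces to the claim that for each $\alpha$ the trace of $\Fr_x$ on $IC_{\ol{X}_{\alpha}}(L^j_{\alpha})$ is obtained from the trace on $IC_{\ol{X}_{\alpha}}(L^{i_0}_{\alpha})$ by applying $\sigma_j'\circ(\sigma_{i_0}')^{-1}$. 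This propagation of $\sigma$-compatibility from the open stratum $X_{\alpha}$ to the whole closure $\ol{X}_{\alpha}$ through the intermediate extension $j_{!*}$ is the main technical obstacle of the plan. I would argue it by induction on the stratification of $\ol{X}_{\alpha}$ underlying the definition of $j_{!*}$ as successive truncations of $j_*$ (or equivalently via the recollement triangles): each inductive step produces the trace function on the newly added stratum from the already-established trace functions by an alternating sum on stalks of the relevant cohomology sheaves, a recipe that manifestly intertwines with the field embedding $\sigma_j'\circ(\sigma_{i_0}')^{-1}$.
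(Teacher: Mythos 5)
Your overall skeleton (reduce to the semisimplification, make the determinant finite order by a Tate twist after enlarging $E$ via Theorem \ref{th-Deligne}, produce $\ell_j$-companions of the constituents by Theorem \ref{Drinfeld}, and control the maximal supports by Lemma \ref{L0-supp}) matches the paper's, and your idea of identifying $Q_j$ with $P_j^{\text{ss}}$ by comparing Frobenius trace functions (Laumon's injectivity of the function--sheaf dictionary on the Grothendieck group, which determines a semisimple perverse Weil sheaf up to isomorphism --- this is Laumon's theorem, not something used in \cite{BBD}) is a legitimate, and in fact somewhat slicker, alternative to the paper's constituent-by-constituent matching via the order of pole of $L(\s F_2\otimes L_1'^{\vee},t)$ at $t=q^{-\dim U}$ and induction on the length of $P_i$.

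The genuine gap is your last step. The claim that $\Tr(\Fr_x,IC_{\ol{X}_{\alpha}}(L^j_{\alpha}))$ at points of the boundary $\ol{X}_{\alpha}\setminus X_{\alpha}$ is obtained from $\Tr(\Fr_x,IC_{\ol{X}_{\alpha}}(L^{i_0}_{\alpha}))$ by applying $\sigma_j'\circ(\sigma_{i_0}')^{-1}$ is exactly Gabber's theorem on independence of $\ell$ for intersection cohomology (\cite[Theorem 3]{Fuj}, in its $\iota_i$-variant as explained after Lemma \ref{L-poids}), which is the key external input of the paper's proof; it is not a formal consequence of compatibility on the open stratum. Your proposed justification --- induction over the strata entering the construction of $j_{!*}$ as successive truncations of $Rj_*$, with the trace on each new stratum given by an alternating sum of stalk traces that ``manifestly intertwines'' with the field embedding --- does not work as stated: the stalks of $R^nj_*L$ at boundary points are local-cohomology invariants that are not determined by the trace function of $L$ on $X_{\alpha}$ by any formal recipe, and their compatibility as $\ell$ varies is precisely the hard open-ended part of such statements; moreover the truncations $\tau_{\le}$ select individual cohomology sheaves, not alternating sums, so even knowing the trace of the full stalk of $Rj_*L$ would not suffice. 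Gabber's proof pins down the trace of $j_{!*}$ on the boundary using purity and weights (which is also why the paper is careful to reduce to irreducible, hence $\iota_i$-pure, constituents before invoking \cite[Theorem 3]{Fuj}). If you replace your stratification sketch by a citation of \cite[Theorem 3]{Fuj} applied to each pair $(L^{i_0}_{\alpha},L^j_{\alpha})$, your argument closes; as written, it assumes the crux of the proposition.
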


\begin{proof}
It suffices to prove it for $I=\{1,2\}$.
Since $[P_i]=[P_i^{\text{ss}}]$, we may assume that the $P_i$'s are semisimple perverse sheaves. We may replace $X$ by the union of the closed supports occurring in $P_1$ or $P_2$, with the reduced subscheme structure. Let
\[
X=\bigcup_{i=1}^rX_i
\]
be the decomposition of $X$ into irreducible components, and put $X_1^0=X-\cup_{i=2}^rX_i$. Let $j:X_1^0\to X$ be the open immersion. Then $\{j^*P_i\}_I$ is a weakly $(E,I)$-compatible system of semisimple mixed perverse sheaves on $X_1^0$. If we can show that $\{j^*P_i\}_I$ is perverse $(E,I)$-compatible, then by Lemma \ref{L0-middle-ext} (i), to be proved in the following, the system $\{j_{!*}j^*P_i\}_I$ is perverse $(E,I)$-compatible, as well as weakly $(E,I)$-compatible by Lemma \ref{L0-middle-ext} (ii). Then we use induction on the length of $P_i$: by induction hypothesis, $\{P_i/j_{!*}j^*P_i\}_I$, being weakly $(E,I)$-compatible, is perverse $(E,I)$-compatible, and therefore
\[
P_i\simeq(j_{!*}j^*P_i)\oplus(P_i/j_{!*}j^*P_i),\quad i=1,2,
\]
is also perverse $(E,I)$-compatible. This allows us to replace $X$ by $X_1^0$, assuming $X$ to be irreducible.

Either $P_1$ or $P_2$ has an open support (since $X$ is the union of the closed supports), and so does the other, as one sees by restricting them to an open subset.
Let $X^0$ be the intersection of all supports occurring in either $P_1$ or $P_2$ that are open, and let $Y$ be the union of all proper (i.e.\ $\neq X$) closed supports occurring in $P_1$ or $P_2$. We put $U=X^0-Y$, and let $\s F_i$ be the semisimple mixed
$\ol{\Q}_{\ell_i}$-local system
\[
\s F_i=\s H^{-\dim X}(P_i|_U)=(P_i|_U)[-\dim X]
\]
on $U$. Then $U$ is regular and irreducible, and
$\s F_1$ and $\s F_2$ are $(E,I)$-compatible. The point is to show that,
after grouping the irreducible factors of $\s F_i$ according to their maximal supports
inside $X$, the groups, one by one, are still $(E,I)$-compatible.

Let
\[
\s F_1=L_1\oplus\cdots\oplus L_n\qquad(\text{resp.\ }\s F_2=M_1\oplus\cdots\oplus
M_m)
\]
be the decomposition into irreducible lisse sheaves. By \cite[Th\'eor\`eme 3.4.1]{Del2}, these direct factors are punctually pure.
And by the $(E,I)$-compatibility, the weights of $\s F_1$ (with multiplicities) coincide with the weights of $\s F_2$. Let $L_1$ be of maximal weight among the $L_j$'s.

By Corollary \ref{C-gal-twist}, there is a finite extension $E'$ of $E$, which we may take to be Galois, embedded in $\ol{\Q}_{\ell_1}$ via some (any) fixed extension $\sigma_1'$ of $\sigma_1$, containing all local traces of the $L_j$'s, such that for any embedding $\sigma':E'\to\ol{\Q}_{\ell'},\ \ell'\ne p$, and for any $1\le j\le n$, the $\sigma'$-companion $L_j^{\sigma'}$ of $L_j$ exists. Clearly, $L_j^{\sigma'}$ is punctually pure of the same weight as $L_j$.
Note that, given an embedding of $E$ into an algebraically closed field $F$, such as $\ol{\Q}_{\ell_i}$, extensions to $E',\ \Hom_E(E',F)$, form a torsor under $\Gal(E'/E)$. We fix an embedding $\sigma_2':E'\to\ol{\Q}_{\ell_2}$, extending $\sigma_2$, and let $I'=\{(\ell_1,\sigma_1'),(\ell_2,\sigma_2')\}$.

We claim that

$\bullet$ for any $\tau\in\Gal(E'/E)$, giving rise to another extension $\sigma_1'\circ\tau$ of $\sigma_1$, and for any $j$, the $(\sigma_1'\circ\tau)$-companion $L_j^{\sigma_1'\circ\tau}$ is isomorphic to $L_{j'}$ for some $j'$, and that

$\bullet$ the $\sigma_2'$-companion $L_1^{\sigma_2'}$ is isomorphic to some $M_j$.

The first one is clear: we have
\[
\s F_1^{\sigma_1'\circ\tau}\simeq L_1^{\sigma_1'\circ\tau} \oplus\cdots\oplus L_n^{\sigma_1'\circ\tau},
\]
in which each $L_j^{\sigma_1'\circ\tau}$ remains irreducible (using $L$-functions), and $\s F_1\simeq\s F_1^{\sigma_1'\circ\tau}$, because $\s F_1$ is semisimple with local Frobenius traces contained in $\sigma_1(E)$. The second claim follows from an argument using $L$-functions. We see that $\s F_2\otimes (L_1^{\sigma_2'})^\vee$ is $(E',I')$-compatible with
$\s F_1\otimes L_1^\vee$. Then we look at the order of the $L$-function $L(\s F_2\otimes(L_1^{\sigma_2'})^\vee,t)$ at $t=q^{-\dim U}$. By the Grothendieck-Lefschetz fixed-point formula, we have
\begin{equation*}
\begin{split}
L(\s F_2\otimes(L_1^{\sigma_2'})^\vee,t)
&=\prod_{j=1}^mL(M_j\otimes(L_1^{\sigma_2'})^\vee,t) \\
&=\prod_{j=1}^m\quad\prod_{i=0}^{2\dim U}\det(\textbf{1}-\Fr_q\cdot t,H^i_c(U_{\bb F},M_j\otimes(L_1^{\sigma_2'})^\vee))^{(-1)^{i-1}},
\end{split}
\end{equation*}
and the lisse sheaves $M_j\otimes(L_1^{\sigma_2'})^\vee$, which are all punctually pure, have only non-positive weights, by our choice of $L_1$. By Deligne's theorem on weights \cite[Corollaire 3.3.4]{Del2}, only those factors with $i=2\dim U$ could possibly contribute to the order at $t=q^{-\dim U}$, and we have
\begin{equation*}
\begin{split}
-\text{ord}_{t=q^{-\dim U}}L(\s F_2\otimes(L_1^{\sigma_2'})^\vee,t)
&=\sum_{j=1}^m \dim\big(H^0(U_{\bb F},M_j^\vee\otimes L_1^{\sigma_2'}(\dim U))^\vee\big)^{\Fr_q=q^{\dim U}} \\
&=\sum_{j=1}^m \dim\big(H^0(U_{\bb F},M_j^\vee\otimes L_1^{\sigma_2'})^\vee\big)^{\Fr_q=1} \\
&=\sum_{j=1}^m\dim H^0(U,M_j^\vee\otimes L_1^{\sigma_2'}) \\
&=\sum_{j=1}^m\dim \Hom_U(M_j,L_1^{\sigma_2'}) \\
&=\#\{1\le j\le m\ |\ M_j\simeq L_1^{\sigma_2'}\}.
\end{split}
\end{equation*}
Here ``$V^{\varphi=\lambda}$" denotes the eigenspace of an operator $\varphi$ on a vector space $V$ with eigenvalue $\lambda$, and for the first equality to hold we need the fact that the lisse sheaves $M_j\otimes(L_1^{\sigma_2'})^\vee$ are semisimple
(the tensor product of two semisimple local systems is again semisimple: see \cite[Chapitre IV, Proposition 5.2]{Chev}).

Say $M_1\simeq L_1^{\sigma_2'}$. Then $\oplus_{j=2}^nL_j$ is $(E',I')$-compatible with $\oplus_{j=2}^mM_j$. Continuing this process, we see that $n=m$, and that $L_j$ and $M_j$ are $(E',I')$-compatible after a possible relabelling.

Let $H\subset\Gal(E'/E)$ be the subgroup of all $\tau$'s such that $L_1^{\sigma_1'\circ\tau}\simeq L_1$; equivalently, it is the subgroup that, under the Galois correspondence, corresponds to the intermediate field of $\sigma_1'(E')/\sigma_1(E)$ generated by all local Frobenius traces $\Tr(\Fr_x,L_1),\ x\in U(\bb F_{q^v})$. Let
\[
\Gal(E'/E)=\coprod_{a=1}^h\quad\tau_aH
\]
be the right-$H$-cosets decomposition. We define
\[
\widetilde{L}_1:=\bigoplus_{a=1}^h\ L_1^{\sigma_1'\circ\tau_a},
\]
and say that the irreducible factors of $\widetilde{L}_1$ are \textit{Galois conjugate} to one another.
These direct factors are mutually non-isomorphic. It is independent, up to isomorphism, of the choice of the coset representatives $\tau_a$. Its local Frobenius traces, a priori in $\sigma_1'(E')$, are invariant under $\Gal(E'/E)$, hence are contained in $\sigma_1(E)$. In fact, for any $\tau\in\Gal(E'/E)$,
also viewed as an automorphism of $\sigma_1'(E')/\sigma_1(E)$, and for any $x\in U(\bb F_{q^v})$, we have
\[
\tau\Tr(\Fr_x,L_1^{\sigma_1'\circ\tau_a})
=\Tr(\Fr_x,L_1^{\sigma_1'\circ(\tau\tau_a)}),
\]
and left multiplication by $\tau$ permutes the right-$H$-cosets $\tau_aH,\ 1\le a\le h$. If for each $a$, we choose a number $1\le j_a\le n$ such that $L_{j_a}\simeq L_1^{\sigma_1'\circ\tau_a}$, then we may identify $\widetilde{L}_1$ with a semisimple lisse subsheaf of $\s F_1$, namely $L_{j_1}\oplus\cdots\oplus L_{j_h}$. Define
\[
\widetilde{M}_1:=M_{j_1}\oplus\cdots\oplus M_{j_h},
\]
viewed as a lisse subsheaf of $\s F_2$.
Then $\widetilde{L}_1$ and $\widetilde{M}_1$ are $(E,I)$-compatible. By Lemma \ref{L0-supp}, each one of $L_{j_a}$ and $M_{j_a}$, and hence $\widetilde{L}_1$ and $\widetilde{M}_1$, have the same maximal support, denoted by $V$, as that of $L_1$.
Also, since each $L_1^{\sigma_1'\circ\tau_a}$ has the same weight as $L_1$, we see that $\widetilde{L}_1$, as well as $\widetilde{M}_1$, are punctually pure, and they have the same weight.

By \cite[Theorem 3]{Fuj}, we see that $\IC_X(\widetilde{L}_1)$ and $\IC_X(\widetilde{M}_1)$ are weakly $(E,I)$-compatible. Restricting to $V$, we see that $\widetilde{L}_1$ and $\widetilde{M}_1$ are $(E,I)$-compatible on their maximal support (i.e.\ $\rho_*\widetilde{L}_1$ and $\rho_*\widetilde{M}_1$, where $\rho:U\to V$ is the open immersion, are $(E,I)$-compatible).
Then by the additivity of local traces in exact triangles, the quotient perverse sheaves $P_1/\IC_X(\widetilde{L}_1)$ and $P_2/\IC_X(\widetilde{M}_1)$ are weakly $(E,I)$-compatible, and by induction on the length of $P_i$, we are done.
\end{proof}

\begin{sublemma}\label{L0-middle-ext}
\emph{(i)}. Let $j:U\to X$ be an immersion of $\bb F_q$-schemes, and let $\{P_i\}_I$ be a perverse $(E,I)$-compatible system of semisimple mixed perverse sheaves on $U$. Then $\{j_{!*}P_i\}_I$ is perverse $(E,I)$-compatible.

\emph{(ii)}. Let $\{P_i\}_I$ be a perverse $(E,I)$-compatible system of semisimple mixed perverse sheaves on an $\bb F_q$-scheme $X$. Then $\{P_i\}_I$ is weakly $(E,I)$-compatible.
\end{sublemma}

\begin{proof}
(i). Let $U_\alpha\hookrightarrow U$ be a finite collection of essentially smooth irreducible locally closed subschemes in $U$, $\{L_\alpha^i\}_I$ be an $(E,I)$-compatible system of semisimple local systems on $U_\alpha$, such that each irreducible factor of any $L_\alpha^i$ has maximal support $U_\alpha$ in $Cl_U(U_\alpha)$, the closure of $U_\alpha$ in $U$, and that
\[
P_i\simeq\bigoplus_\alpha\ \IC_{Cl_U(U_\alpha)}(L_\alpha^i),
\]
as in Definition \ref{D1} (iii). Then by the transitivity of intermediate extensions \cite[(2.1.7.1)]{BBD}, we have
\[
j_{!*}P_i\simeq\bigoplus_\alpha\ \IC_{Cl_X(U_\alpha)}(L_\alpha^i).
\]

As we saw in the proof of Proposition \ref{L1}, by enlarging the index set for $\alpha$, we may assume that, for each $\alpha$ and $i\in I$, the irreducible factors of $L_\alpha^i$ are Galois conjugate. Also, for each $\alpha$, the lisse sheaves $L_\alpha^i\ (i\in I)$ have the same number of irreducible factors, and these factors are in $(E',I')$-compatible correspondence, for some extension $(E',I')$. Then by Lemma \ref{L0-supp}, for each $\alpha$, the irreducible factors of any one of the $L_\alpha^i$'s all have the same maximal support inside $Cl_X(U_\alpha)$, denoted $V_\alpha$. Finally, it remains to show that, for each $\alpha$, the system $\{L_\alpha^i\}_I$ is $(E,I)$-compatible on $V_\alpha$. Again as above, this follows from \cite[Theorem 3]{Fuj}, since we have reduced to the case where the $L_\alpha^i$'s are punctually pure on $U_\alpha$.

(ii). Let
\[
P_i\simeq\bigoplus_{\alpha}\ \IC_{\ol{X}_{\alpha}} (L^i_{\alpha})
\]
be the decomposition in Definition \ref{D1} (iii). We may replace $P_i$ by $\IC_{\ol{X}_{\alpha}} (L^i_{\alpha})$, and replace $X$ by $\ol{X}_\alpha$.
Changing notations, we now have, for $i=1,2$, $P_i=\IC_X(\s F_i)$, where $\s F_i$ is a semisimple mixed local system on some regular irreducible dense open set $U\subset X$, and $\s F_1$ and $\s F_2$ are $(E,I)$-compatible. So we are back to the situation in the proof of Proposition \ref{L1}.
From the proof above, we have decompositions
\[
\s F_1=\widetilde{L}_1\oplus\cdots\oplus\widetilde{L}_r,\quad \s F_2=\widetilde{M}_1\oplus\cdots\oplus\widetilde{M}_r
\]
into punctually pure sub-local systems, such that $\widetilde{L}_j$ and $\widetilde{M}_j$ are $(E,I)$-compatible with each other. So by \cite[Theorem 3]{Fuj}, we see that $\IC_X(\widetilde{L}_j)$ and $\IC_X(\widetilde{M}_j)$ are weakly $(E,I)$-compatible, thus $P_1$ and $P_2$ are weakly $(E,I)$-compatible.
\end{proof}

\begin{prop}\label{L2}
If $\{K_i\}_I$, for $K_i\in W_m^b(X,\ol{\bb Q}_{\ell_i})$, is a weakly $(E,I)$-compatible system of mixed complexes, then it is weakly perverse $(E,I)$-compatible.
\end{prop}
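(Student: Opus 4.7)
The plan is to proceed by noetherian induction on $\dim X$, extracting $IC$-components of maximal support at each step. After reducing to $X$ integral of dimension $d$, fix one distinguished index $i_0\in I$ and choose a smooth dense open $j:U\hookrightarrow X$ such that each $\s H^m(K_{i_0}|_U)$ is lisse, $U$ meets no closed support of dimension $<d$ occurring in $P_{i_0}^{+}$ or $P_{i_0}^{-}$, and $U$ lies in the maximal support of every local system underlying a full-support $IC$-summand of $P_{i_0}^{\pm}$. Since $U$ is smooth of dimension $d$ we have $\leftexp{p}{\s H^n(K_{i_0}|_U)}=\s H^{n-d}(K_{i_0}|_U)[d]$, so the canonical representative restricts to
\[
P_{i_0}^{\pm}|_U \;=\; \bigoplus_{L\,:\,\pm m(L)>0} L[d]^{\oplus|m(L)|},\qquad
m(L)\;:=\;\sum_{n\in\Z}(-1)^n\,\mathrm{mult}\bigl(L[d],\,\bigl(\leftexp{p}{\s H^n(K_{i_0}|_U)}\bigr)^{\mathrm{ss}}\bigr)\in\Z,
\]
where $L$ runs over the distinct irreducible lisse $\ol{\bb Q}_{\ell_{i_0}}$-sheaves on $U$.

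For every such $L$ with $m(L)\neq0$, after enlarging $(E,I)$ to some $(E',I')$ and applying a Tate twist (cf.\ \cite[1.3.6]{Del2}) so that $\det(L)$ has finite order, Theorem \ref{Drinfeld} provides for each $i\in I$ an $\ell_i$-companion $L^{(i)}$ of $L$ on $U$. Lemma \ref{L0-supp} ensures that all these companions share a common maximal support $U_L\supseteq U$, and \cite[Theorem 3]{Fuj} implies that each family $\{IC_{\ol{U_L}}(L^{(i)})\}_I$ is weakly $(E',I')$-compatible. Set
\[
\widetilde{P}_i^{\pm}\;:=\;\bigoplus_{L\,:\,\pm m(L)>0}\,IC_{\ol{U_L}}(L^{(i)})^{\oplus|m(L)|}.
\]
The key observation is that $m(L)$ is encoded in the trace function of $K_i$ at closed points of $U$, which equals $\sigma_i$ applied to the common $t_x\in E$; hence $m(L)$, and in particular its sign, is preserved under companions, and the families $\{\widetilde{P}_i^{\pm}\}_I$ are perverse $(E',I')$-compatible.

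Consider the residual virtual perverse class $R_i:=\leftexp{p}{[K_i]}-[\widetilde{P}_i^{+}]+[\widetilde{P}_i^{-}]$. By construction its trace function vanishes on $U$, and since a semisimple virtual perverse sheaf on a smooth open is determined by its trace at closed points, $R_i$ is supported on the proper closed subscheme $Z:=X\setminus U$. The family $\{R_i\}_I$ remains weakly $(E',I')$-compatible, and the inductive hypothesis applied to $Z$ (using any complex representatives of $R_i$, which automatically form a weakly compatible family) gives canonical $R_i=[Q_i^{+}]-[Q_i^{-}]$ with $\{Q_i^{\pm}\}_I$ perverse $(E'',I'')$-compatible. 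Since $\widetilde{P}_i^{\pm}$ have full support $X$ while $Q_i^{\pm}$ are supported on $Z\subsetneq X$, and since $\widetilde{P}_i^{+},\widetilde{P}_i^{-}$ (respectively $Q_i^{+},Q_i^{-}$) share no common factors by construction, the sums $\widetilde{P}_i^{\pm}\oplus Q_i^{\pm}$ contain no common irreducible constituents. By uniqueness of the canonical representative, $P_i^{\pm}\simeq\widetilde{P}_i^{\pm}\oplus Q_i^{\pm}$, each perverse $(E'',I'')$-compatible, completing the induction.

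The principal obstacle is the top-stratum step: verifying that the sign of the integer $m(L)$ is preserved under Drinfeld companions. This rests on the uniqueness of companions (Theorem \ref{Drinfeld} together with Lemma \ref{L0-supp}) and the fact that trace functions faithfully detect virtual semisimple perverse classes, so that multiplicities themselves (and hence their signs) transport intact.
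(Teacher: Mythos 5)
Your argument is correct and reaches the same conclusion, but via a genuinely different inductive structure than the paper's. The paper reduces to $|I|=2$, restricts to the common open $U$ where all open supports of $P_i^{\pm}$ meet, and then peels off \emph{one} full-support $IC$ at a time (induction on the length of the perverse sheaves), re-verifying at each step that the companion of a new irreducible factor of $\s F_1^+$ lands in $\s F_2^+$ rather than $\s G_1^-$. You instead run a noetherian induction on the support: choose $U$ from a single distinguished index $i_0$, extract the \emph{entire} top-dimensional piece at once by forming the companion $IC$'s $\widetilde{P}_i^{\pm}$, and observe that the residual virtual class $R_i$ has trace vanishing on $U$, hence (by Chebotarev applied to the canonical representative) has all its constituents supported in $Z = X\setminus U$, so it can be handed off to the inductive hypothesis. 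The two proofs use exactly the same core ingredients --- Drinfeld companions after a Tate twist, Lemma~\ref{L0-supp}, Gabber's theorem \cite[Theorem 3]{Fuj}, and the uniqueness of the canonical representative --- but your organization separates the top-stratum analysis from the lower-dimensional recursion cleanly, at the cost of making the residual step slightly more delicate (one must check that a choice of complex representative for $R_i$ on $Z$, e.g.\ $Q_i^+\oplus Q_i^-[1]$, remains weakly compatible, and that $Q_i^{\pm}|_U=0$ forces supports inside $Z$; both hold and you address them). One presentational point: you speak of ``an $\ell_i$-companion of $L$'' where, strictly, Drinfeld's theorem is applied to the Tate twist of $L$ with finite-order determinant and then one untwists, as the paper spells out in the proof of Proposition~\ref{L1}; your parenthetical reference to \cite[1.3.6]{Del2} indicates you are aware of this, so it is a matter of exposition rather than a gap.
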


\begin{proof}
We may assume that $I=\{1,2\}$. Let
\[
[K_i]=[P^+_i]-[P^-_i]
\]
be the canonical representatives. Then each $P_i^\pm$ is a semisimple mixed perverse sheaf \cite[Stabilit\'es 5.1.7 (i)]{BBD}. By Proposition \ref{L1}, it suffices to show that $P_1^+$ and $P_2^+$ are weakly $(E,I)$-compatible: combined with the weak $(E,I)$-compatibility of $K_1$ and $K_2$, this then deduces the weak $(E,I)$-compatibility of $P_1^-$ and $P_2^-$, and one applies Proposition \ref{L1} again.

Again we may assume that $X$ is the union of the closed supports occurring in any one of $P_i^+\oplus P_i^-$, and that $X$ is integral. Then both $P^+_1\oplus P^-_1$ and $P^+_2\oplus P^-_2$ have an open support. Let $U=X^0-Y$, where $X^0$ is the intersection of all open supports occurring in any one of the $P^{\pm}_i$, and $Y$ is the union of all proper closed supports occurring in any one of the $P^\pm_i$. Let
\[
\s F_i^{\pm}:=\s H^{-\dim X}(P_i^{\pm}|_U)=(P_i^\pm|_U)[-\dim X],
\]
which are semisimple mixed lisse sheaves on $U$. Then $\{[\s F^+_i]-[\s F^-_i]\}_I$ is weakly $(E,I)$-compatible on $U$, namely for any $x\in U(\bb F_{q^v})$, we have an equality in $E$
\[
\sigma_1^{-1}\big(\text{Tr(Frob}_x,\s F_1^+)-\text{Tr(Frob}_x,\s
F_1^-)\big)=\sigma_2^{-1}\big(\text{Tr(Frob}_x,\s F_2^+)-\text
{Tr(Frob}_x,\s F_2^-)\big).
\]

By Corollary \ref{C-gal-twist}, applied to the irreducible factors of $\s F_i^\pm\ (i=1,2)$ altogether, there is an extension $(E',I')$ of $(E,I)$, with $E'/E$ Galois, such that the local Frobenius traces of any irreducible factor of $\s F_i^\pm$ are contained in $\sigma_i'(E')$, and that for any embedding $\sigma':E'\to\ol{\Q}_{\ell'},\ \ell'\ne p$, any one of these factors has a $\sigma'$-companion. Then from the equation above, we see that $\s F_1^+\oplus(\s F_2^-)^{\sigma_1'}$ and $\s F_2^+\oplus(\s F_1^-)^{\sigma_2'}$ are $(E',I')$-compatible, and using an $L$-function argument as in the proof of Proposition \ref{L1}, their irreducible factors are in $(E',I')$-compatible one-to-one correspondence.

Since $\s F^+_i$ and $\s F^-_i$ have no irreducible factors in common, one deduces, from the uniqueness of companions, that the irreducible factors of $\s F_1^+$ cannot have their $\sigma_2'$-companions occurring in $(\s F_1^-)^{\sigma_2'}$, and similarly for $\s F_2^+$. Therefore, the irreducible factors of
$\s F_1^+$ and $\s F_2^+$ are in $(E',I')$-compatible one-to-one correspondence, and similarly for $\s F_1^-$ and $\s F_2^-$.

We claim that $\s F_1^+$ has local Frobenius traces contained in $\sigma_1(E)$. As a consequence, $\s F_1^+$ and $\s F_2^+$, and hence $\s F_1^-$ and $\s F_2^-$, are $(E,I)$-compatible. Let $\tau\in\Gal(E'/E)$, also viewed as an automorphism of $\sigma_1'(E')/\sigma_1(E)$. We have, for any $x\in U(\bb F_{q^v})$,
\begin{equation*}
\begin{split}
\Tr(\Fr_x,\s F_1^+)-\Tr(\Fr_x,\s F_1^-)&=
\tau\Tr(\Fr_x,\s F_1^+)-\tau\Tr(\Fr_x,\s F_1^-) \\
&=\Tr(\Fr_x,(\s F_1^+)^{\sigma_1'\circ\tau})-\Tr(\Fr_x,(\s F_1^-)^{\sigma_1'\circ\tau}),
\end{split}
\end{equation*}
thus
\[
\Tr(\Fr_x,\s F_1^+\oplus(\s F_1^-)^{\sigma_1'\circ\tau})
=\Tr(\Fr_x,\s F_1^-\oplus(\s F_1^+)^{\sigma_1'\circ\tau}),
\]
and by Lemma \ref{L-Chebotarev}, $\s F_1^+\oplus(\s F_1^-)^{\sigma_1'\circ\tau}\simeq\s F_1^-\oplus(\s F_1^+)^{\sigma_1'\circ\tau}$. Therefore, since $\s F_1^+$ and $\s F_1^-$ have no irreducible factors in common, we have $\s F_1^+\simeq(\s F_1^+)^{\sigma_1'\circ\tau}$, hence
\[
\Tr(\Fr_x,\s F_1^+)=\Tr(\Fr_x,(\s F_1^+)^{\sigma_1'\circ\tau})
=\tau\Tr(\Fr_x,\s F_1^+).
\]
This finishes the proof of the claim.

Next, we repeat the construction in the proof of Proposition \ref{L1}, with $\s F_i$ therein replaced by $\s F_i^+$ here, to get $(E,I)$-compatible lisse subsheaves $\widetilde{L}_1\subset\s F_1^+$ and $\widetilde{M}_1\subset\s F_2^+$, punctually pure of the same weight. By \cite[Theorem 3]{Fuj}, $\IC_X(\widetilde{L}_1)$ and $\IC_X(\widetilde{M}_1)$ are weakly $(E,I)$-compatible.
This finishes the proof of the proposition: by the additivity of local traces in the Grothendieck group of perverse sheaves, one deduces that
$[P_1^+/\IC_X(\widetilde{L}_1)]-[P_1^-]$
and $[P_2^+/\IC_X(\widetilde{M}_1)]-[P_2^-]$
are weakly $(E,I)$-compatible, and by induction on the length of $P_i^+$, we are done.
\end{proof}

\section{The main result}

\begin{defn}\label{D3-pss}
We say that a complex $\s F\in W^b(X,\Qlb)$ is \textit{perverse
semisimple} if it is isomorphic to a direct sum of shifted
semisimple perverse sheaves, i.e.\
$\s F\simeq\bigoplus_n\leftexp{p}{\s
H^n}(\s F)[-n]$ and each $\leftexp{p}{\s H^n}(\s
F)$ is a semisimple perverse sheaf on $X.$
\end{defn}

\begin{thm}\label{T1}
Let $f:X\to Y$ be a proper morphism of $\bb F_q$-schemes,
and let $\{\s F_i\}_I$ be a strongly perverse $(E,I)$-compatible system of perverse semisimple mixed complexes on $X$. Then $\{Rf_*\s F_i\}_I$ is strongly perverse $(E,I)$-compatible.
\end{thm}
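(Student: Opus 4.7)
The plan is to reduce to the case $\s F_i = IC_{\ol{Z}}(L^i)$ with $\{L^i\}_I$ a compatible system of pure lisse sheaves of common weight, and then to extract each individual $\pH^n Rf_*\s F_i$ by a weight argument applied to the canonical representative supplied by Proposition \ref{L2}.

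\textit{Reductions.} Since each $\s F_i$ is perverse semisimple, $\s F_i \simeq \bigoplus_m \pH^m(\s F_i)[-m]$, and hence
\[
\pH^n Rf_*\s F_i \simeq \bigoplus_m \pH^{n-m}Rf_*(\pH^m\s F_i),
\]
so I may assume each $\s F_i$ is a semisimple, perverse $(E,I)$-compatible perverse sheaf. Writing $\s F_i \simeq \bigoplus_\alpha IC_{\ol{X}_\alpha}(L^{i,\alpha})$ and commuting $Rf_*$ with the direct sum further reduces to the case $\s F_i = IC_{\ol{Z}}(L^i)$ for a fixed $\ol{Z}$ and some weakly $(E,I)$-compatible system $\{L^i\}_I$ of semisimple lisse sheaves on $Z$. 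Using Remark \ref{R-closed_supp} to decompose each $L^i$ into its $\iota_i$-punctually pure components while keeping each weight part compatible (after possibly enlarging $(E,I)$), and using Lemma \ref{L-poids} to match the weights across $i$, I may finally assume each $L^i$ is pure of a common weight $w_L$.

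\textit{Main step.} Then $\s F_i$ is pure of weight $w := w_L + \dim Z$; since $f$ is proper, BBD's purity theorem gives $Rf_*\s F_i$ pure of weight $w$, and consequently each $\pH^n(Rf_*\s F_i)$ is a pure semisimple perverse sheaf of weight $w+n$. In particular, for distinct values of $n$ these perverse sheaves share no isomorphism class of simple summand, weight being an isomorphism invariant. The Grothendieck--Lefschetz trace formula combined with proper base change shows that $\{Rf_*\s F_i\}_I$ is weakly $(E,I)$-compatible, so by Proposition \ref{L2} it is weakly perverse $(E',I')$-compatible for some extension $(E',I')$. The weight disjointness just observed then forces the canonical representatives of $\leftexp{p}{[Rf_*\s F_i]}$ to be
\[
P^+_i = \bigoplus_{n\text{ even}} \pH^n(Rf_*\s F_i), \qquad P^-_i = \bigoplus_{n\text{ odd}} \pH^n(Rf_*\s F_i),
\]
without any cancellation, so Proposition \ref{L2} gives that $\{P^+_i\}_I$ and $\{P^-_i\}_I$ are each perverse $(E',I')$-compatible. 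The weight-refinement recorded in Remark \ref{R-closed_supp}, applied to $\{P^+_i\}_I$, picks out the weight-$(w+n)$ piece as a perverse $(E',I')$-compatible sub-system; but this piece is exactly $\{\pH^n(Rf_*\s F_i)\}_I$ for even $n$. The odd-$n$ case is handled symmetrically by $\{P^-_i\}_I$. This yields the desired strong perverse $(E',I')$-compatibility of $\{Rf_*\s F_i\}_I$.

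The main obstacle is the weight bookkeeping: the argument hinges on BBD's weight-shift statement that $\pH^n$ of a pure complex of weight $w$ is pure of weight $w+n$, which is what makes distinct perverse cohomology degrees land in pairwise disjoint weights, so that they can then be separated via Remark \ref{R-closed_supp}.
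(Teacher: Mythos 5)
Your argument follows the paper's proof essentially step for step: reduce to $\s F_i=IC_{\ol Z}(L^i)$ with $\{L^i\}_I$ pure of a common weight (the paper reduces further to irreducible $L^i$, but the effect is the same), invoke Gabber's and Deligne's purity to make $Rf_*\s F_i$ pure of weight $w$ and hence $\pH^nRf_*\s F_i$ pure of weight $w+n$, get weak perverse compatibility from the Lefschetz trace formula plus Proposition \ref{L2}, and then use the pairwise-disjoint weights of the $\pH^n$'s to rule out cancellation and to sort the matched constituents $IC_{\ol Y_\alpha}(M^i_{\alpha\beta})$ into the correct perverse degree $n$ via Lemma \ref{L-poids}. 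The only cosmetic deviation is that in the final sorting you cite Remark \ref{R-closed_supp} where the paper invokes Lemma \ref{L-poids} directly on the matched families $\{M^i_{\alpha\beta}\}_I$, but the underlying weight-matching argument is identical.
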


The properness assumption is made, in order to apply Deligne's theorem on weights, as we will see in the following proof.

\begin{proof}
We may assume $I=\{1,2\}$.
By passing to $\leftexp{p}{\s H^n}$ and then to their direct factors as in Definition \ref{D1} (iii), we may assume that $\s F_i=\IC_{\ol
{U}}(L_i)$, where $L_1$ and $L_2$ are $(E,I)$-compatible semisimple local systems on a connected smooth locally closed subset $U\subset X$, whose irreducible factors all have $U$ as their maximal support inside $\ol{U}$. Replacing $X$ by the closure of $U$, we may assume that $U$ is open dense.
Repeating the construction in the proof of Proposition \ref{L1}, with $\s F_1$ and $\s F_2$ therein replaced by $L_1$ and $L_2$ here, we get $(E,I)$-compatible lisse direct summands  $\widetilde{L}\subset L_1$ and $\widetilde{M}\subset L_2$, punctually pure of the same weight. Replacing $L_1$ and $L_2$ by these two direct summands and using induction, we may assume that $L_1$ and $L_2$ themselves are punctually pure, say of weight $w$.

By the purity theorems of Gabber \cite[Corollaire 5.3.2]{BBD} on middle extensions and of Deligne \cite[Proposition 6.2.6]{Del2} on proper pushforwards, $K_i:=Rf_*\IC_X(L_i)$ is a pure complex on $Y$, of weight $w+\dim U$. Therefore by \cite[Corollaire 5.4.4]{BBD}, $\leftexp{p}{\s H^n}(K_i)$ is pure of weight $w+\dim U+n$, for each $n\in\Z$.

By \cite[Theorem 3]{Fuj}, $\IC_X(L_1)$ and $\IC_X(L_2)$ are weakly $(E,I)$-compatible. By the Grothendieck-Lefschetz trace formula (see \cite[Theorem 2]{Fuj}), $K_1$ and $K_2$ are weakly $(E,I)$-compatible, hence are weakly perverse $(E,I)$-compatible, by Proposition \ref{L2}. Let
\[
[K_i]=[P_i^+]-[P_i^-]
\]
be the canonical representatives. From the proof of Proposition \ref{L1}, we have
\[
P_i^+\oplus P_i^-\simeq\bigoplus_\alpha\ \IC_{\ol
{Y}_{\alpha}}(M_\alpha^i),
\]
for some smooth connected locally closed subschemes $Y_\alpha\subset Y$, and for each $\alpha$ an $(E,I)$-compatible system $\{M_\alpha^i\}_I$ of semisimple punctually pure lisse sheaves on $Y_\alpha$, all of whose irreducible factors have maximal support $Y_\alpha$.

Since the perverse cohomology sheaves $\leftexp{p}{\s H^n}(K_i),\ n\in\Z$, have different weights, there is no cancellation in
\[
[K_i]=\sum_n(-1)^n\ [\leftexp
{p}{\s H^n}(K_i)],
\]
namely,
\[
P_i^+\simeq\bigoplus_{n\text{ even}}\leftexp
{p}{\s H^n}(K_i)^{\text{ss}}\qquad \text{and} \qquad
P_i^-\simeq\bigoplus_{n\text{ odd}}\leftexp{p}{\s H^n}(K_i)^{\text{ss}}.
\]
Thus for each $n$, we have
\[
\leftexp{p}{\s H^n}(K_i)^{\text{ss}}\simeq\bigoplus_{\alpha\in A}\ \IC_{\ol{Y}_{\alpha}}(M^i_{\alpha}),
\]
where $A$ is the set of all $\alpha$'s
such that $M_{\alpha}^i$ is punctually pure of weight $w+\dim U+n-\dim Y_{\alpha}$, a number that is independent of $i$. By Lemma \ref{L-poids}, $M_{\alpha}^1$ and $M_{\alpha}^2$ have the same weight, so the set $A$ is independent of $i$. Therefore, $\{\leftexp{p}{\s H^n}(K_i)\}_I$ is perverse $(E,I)$-compatible for each $n$, in other words, $\{K_i\}_I$ is strongly perverse $(E,I)$-compatible.
\end{proof}

The following is a ``geometric" statement, in the sense that it is over the algebraic closure $\bb F$.

\begin{cor}\label{C1}
Let $f:X\to Y$ be a proper morphism of schemes over $\bb F$. Then the supports occurring in the decomposition of $Rf_*\IC_X(\Qlb)$ into shifted irreducible perverse sheaves, as well as the connected monodromy groups of the irreducible local systems on each support, are independent of $\ell\ne p$.
\end{cor}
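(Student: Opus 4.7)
The plan is to descend $f$ to a finite field, apply Theorem~\ref{T1}, and invoke a theorem of Chin on monodromy groups of compatible systems. By standard spreading out, there exist a finite subfield $\bb F_q\subset\bb F$ and a proper morphism $f_0\colon X_0\to Y_0$ of $\bb F_q$-schemes whose base change along $\Spec\bb F\to\Spec\bb F_q$ is $f$. Let $I=\{(\ell_i,\sigma_i)\}_i$ with $\ell_i\ne p$ a prime and $\sigma_i\colon\bb Q\hookrightarrow\Qlb_{\ell_i}$ the canonical embedding, and set $\s F_i:=IC_{X_0}(\Qlb_{\ell_i})$. Each $\s F_i$ is irreducible perverse (hence perverse semisimple), with underlying local system on the essentially smooth locus of $X_0$ the constant sheaf $\Qlb_{\ell_i}$, whose Frobenius traces are integers. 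Hence $\{\s F_i\}_I$ is a strongly perverse $(\bb Q,I)$-compatible system on $X_0$.

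Theorem~\ref{T1} applied to $\{\s F_i\}_I$ yields that $\{Rf_{0*}\s F_i\}_I$ is strongly perverse $(E',I')$-compatible for some extension $(E',I')$: for each $n\in\bb Z$ there is a collection $\{Y_\alpha\}$ of essentially smooth locally closed subsets of $Y_0$ (independent of $i$) and, on each, a weakly $(E',I')$-compatible system $\{L^i_\alpha\}_I$ of semisimple local systems with maximal support $Y_\alpha$, with
\[
\leftexp{p}{\s H^n}(Rf_{0*}\s F_i)^{\text{ss}}\simeq\bigoplus_\alpha IC_{\ol{Y}_\alpha}(L^i_\alpha).
\]
Proper base change along $Y=Y_0\times_{\bb F_q}\bb F\to Y_0$ transports this decomposition to that of $Rf_*IC_X(\Qlb_{\ell_i})$ on $Y$. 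The supports occurring on $Y$ are the irreducible components of the pullbacks $(\ol{Y}_\alpha)_{\bb F}$, which are manifestly independent of $\ell$; this gives the first assertion.

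For the second assertion, observe that on any such support $Z$ on $Y$, the local system is the restriction of the pullback of some $L^i_\alpha$, and the $(E',I')$-compatibility of $\{L^i_\alpha\}_I$ over the $\bb F_q$-model $Y_\alpha$ is inherited by the pullback system on $Z$ (via a suitable $\bb F_{q^m}$-model of its image in $Y_\alpha$). Now by a theorem of Chin---for a compatible system of semisimple lisse $\ell$-adic sheaves on a normal connected variety over a finite field, the identity components of the geometric algebraic monodromy groups are mutually isomorphic (after base change to $\bb C$ via the fixed embeddings $\iota_i$)---we obtain the independence of $\ell$ of the connected monodromy groups.

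The main obstacle is this last step: extracting isomorphism of the connected monodromy groups from mere compatibility of local systems requires Chin's theorem, a substantial input beyond what is proved here. The preceding steps---spreading out, the application of Theorem~\ref{T1}, and proper base change to $\bb F$---are routine once the setup above is in place.
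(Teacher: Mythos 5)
Your overall strategy coincides with the paper's: spread $f$ out to a model $f_0$ over $\bb F_q$, feed the constant compatible system $\{IC_{X_0}(\ol{\bb Q}_{\ell_i})\}$ into Theorem~\ref{T1}, and then invoke Chin's theorem for the monodromy statement. Citing Chin is not the issue (the paper also quotes \cite[Theorem 1.6]{Chin}, after replacing $(Y_{\alpha,0})$ by its reduction and noting the weights are integers independent of $\ell$).

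The genuine gap is in the passage from $\bb F_q$ to $\bb F$, which you treat as routine base change. Theorem~\ref{T1} gives common supports $Y_{\alpha,0}$ and compatible local systems $L^i_{\alpha}$ over the finite field, but the decomposition of $Rf_*IC_X(\ol{\bb Q}_{\ell_i})$ over $\bb F$ into \emph{irreducible} perverse sheaves is in general strictly finer: $Y_{\alpha,0}$ need not be geometrically irreducible, and the pullback of $L^i_{\alpha}$ to $\bb F$ need not remain irreducible, so the local systems actually occurring on each support over $\bb F$ are the geometric irreducible constituents of these pullbacks, not the pullbacks themselves. For the supports your remark about irreducible components of $(\ol{Y}_{\alpha})_{\bb F}$ essentially works (though you still need geometric semisimplicity of the pullbacks, i.e.\ purity, to know that no constituent has strictly smaller support). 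But for the monodromy groups, your parenthetical ``via a suitable $\bb F_{q^m}$-model'' assumes exactly what must be proved: that the geometric irreducible constituents admit models over a common finite subfield, with $m$ independent of $\ell$, forming compatible systems to which Chin's theorem can be applied. This is where the paper's proof does its real work: the constituents are of geometric origin (cf.\ \cite[6.2.4]{BBD}), hence defined over finite subfields $\bb F_{q^{v_{\ell}}}$, and a counting argument (the number of constituents is independent of $\ell$, by the proof of Proposition~\ref{L1}, so geometric irreducibility of the $\ell_0$-constituents after base change to $\bb F_{q^{v_{\ell_0}}}$ forces it for every $\ell$) shows $v_{\ell}$ can be chosen independently of $\ell$; only after this uniform finite base change is the geometric decomposition the pullback of a compatible decomposition over a finite field, and only then does Chin's theorem apply to the local systems that actually occur. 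Without this step your application of Chin addresses the (possibly reducible) pullbacks of the $L^i_{\alpha}$ rather than the local systems in the statement.
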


\begin{proof}
Let $E=\Q$, and $I_0=\{(\ell,\sigma_\ell)|\ell\ne p\}$, where $\sigma_\ell$ is the unique embedding $\Q\to\ol{\Q}_\ell$.
Let $f_0:X_0\to Y_0$ be a model of $f$ over a finite subfield $\bb F_q\subset\bb F$. Then by \cite[Theorem 3]{Fuj}, $\{\IC_{X_0}(\Qlb)\}_{I_0}$ is a weakly $(\Q,I_0)$-compatible system of semisimple pure perverse sheaves on $X_0$, hence perverse $(\Q,I_0)$-compatible by Proposition \ref{L1}.
By Theorem \ref{T1}, $\{Rf_{0*}\IC_{X_0}(\Qlb)\}_{I_0}$ is strongly perverse $(\Q,I_0)$-compatible, so for each $n\in\Z$, we have a decomposition
\[
\big(\leftexp{p}{R^nf_{0*}}\IC_{X_0}(\Qlb)\big)^{\text{ss}}
\simeq\bigoplus _{\alpha}\ \IC_{\ol{Y}_{\alpha,0}}(M^{\ell}_{\alpha,0}),
\]
where for each index $\alpha$ occurring in the direct sum, $\{M^{\ell}_{\alpha,0}\}_{I_0}$ is a $(\Q,I_0)$-compatible system of semisimple mixed lisse sheaves on $Y_{\alpha,0}$, smooth and connected, such that each irreducible factor of $M^\ell_{\alpha,0}$ has $Y_{\alpha,0}$ as its maximal support.

Making a finite base extension $\bb F_{q^v}/\bb F_q$ if necessary, we may assume that all supports $Y_{\alpha,0}$ are geometrically connected. For each $\ell$, making a further base extension $\bb F_{q^{v_\ell}}/\bb F_q$, of degree $v_\ell$ a priori depending on $\ell$, the irreducible factors of $M_{\alpha,0}^\ell$ are geometrically irreducible. We claim that these degrees $v_\ell$ can be taken to be independent of $\ell$. In fact, as we saw in the proof of Proposition \ref{L1}, $(E,I)$-compatible semisimple mixed lisse sheaves have the same number of irreducible factors. Let $\ell_0\ne p$ be a fixed prime. Then all the irreducible factors of $M^{\ell}_{\alpha,0}$, for any other prime $\ell\ne p$, must be geometrically irreducible over $\bb F_{q^{v_{\ell_0}}}$, since if any one of them were to split over a finite extension of $\bb F_{q^{v_{\ell_0}}}$, so would the corresponding factor in $M^{\ell_0}_{\alpha,0}$, contradicting to the choice of $v_{\ell_0}$.

Therefore, by making a finite base extension, we may assume that all irreducible factors of $M^{\ell}_{\alpha,0}$, for all $\ell$ and $\alpha$, are geometrically irreducible.
Then the supports occurring in $\leftexp{p}{R^nf_*}\IC_X(\Qlb)$ are the $Y_{\alpha}=Y_{\alpha,0}\otimes\bb F$, independent of $\ell$.

The connected monodromy groups are by definition the identity components of the Zariski closures of the images of the representations
\[
\rho^{\ell}_{\alpha\beta}:\pi_1(Y_{\alpha})\to GL(V^{\ell}_{\alpha\beta})
\]
corresponding to various irreducible factors $M^{\ell}_{\alpha\beta}$ of $M^\ell_\alpha$ on $Y_{\alpha}$. As we saw in the proof of Proposition \ref{L1}, the irreducible local systems $\{M^{\ell}_{\alpha\beta,0}\}_{I_0}$, for each pair $(\alpha,\beta)$, are $(E,I)$-compatible, punctually pure of some weight $w_{\alpha\beta}$ independent of $\ell$, for some extension $(E,I)$ of $(\Q,I_0)$. Then the last assertion follows from \cite[Theorem 1.6]{Chin}, generalized to higher dimensions (see the remark after \textit{loc.\ cit.}).
\end{proof}

\begin{remark}\label{stack}
(i) Weizhe Zheng suggested to me the following variant of Theorem \ref{T1}. Following \cite[Definition 3.2.1]{Sun-Zheng}, we say that a complex $K\in W^b(X,\Qlb)$ is a \textit{split complex} if it is isomorphic to a direct sum of shifted perverse sheaves.

\begin{subthm}\label{T2}
Let $f:X\to Y$ be a proper morphism of $\bb F_q$-schemes, $w$ be an integer, and let $\{\s F_i\}_I$ be a weakly $(E,I)$-compatible system of split complexes on $X$, each of which is pure of weight $w$. Then $\{Rf_*\s F_i\}_I$ is strongly perverse $(E,I)$-compatible.
\end{subthm}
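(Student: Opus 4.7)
The plan is to run the proof of Theorem \ref{T1} essentially verbatim, replacing its initial step with a direct application of the Grothendieck--Lefschetz trace formula.

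First I would verify that $\{Rf_*\s F_i\}_I$ is weakly $(E,I)$-compatible on $Y$. For any $v\ge1$ and $y\in Y(\bb F_{q^v})$, proper base change and the trace formula give
\[
\Tr(\Fr_y, Rf_*\s F_i)=\sum_{x\in X_y(\bb F_{q^v})}\Tr(\Fr_x, \s F_i),
\]
and by weak compatibility of $\{\s F_i\}_I$ each summand is the $\sigma_i$-image of a common element of $E$, so the left-hand side is as well. Proposition \ref{L2} then promotes this to weakly perverse $(E',I')$-compatibility for some extension $(E',I')$; write the canonical representatives as $\leftexp{p}{[Rf_*\s F_i]}=\leftexp{p}{[P_i^+]}-\leftexp{p}{[P_i^-]}$, with $\{P_i^{\pm}\}_I$ perverse $(E',I')$-compatible.

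Next, I would invoke purity. Since $f$ is proper and $\s F_i$ is pure of weight $w$, the pushforward $Rf_*\s F_i$ is pure of weight $w$ by \cite[Proposition 6.2.6]{Del2}, and \cite[Corollaire 5.4.4]{BBD} then gives that $\leftexp{p}{\s H^n}(Rf_*\s F_i)$ is pure of weight $w+n$ for every $n$. Because distinct values of $n$ yield distinct weights, there is no cancellation in
\[
\leftexp{p}{[Rf_*\s F_i]}=\sum_n(-1)^n\,\leftexp{p}{[\leftexp{p}{\s H^n}(Rf_*\s F_i)]},
\]
and uniqueness of the canonical representative forces
\[
P_i^+\simeq\bigoplus_{n\text{ even}}\leftexp{p}{\s H^n}(Rf_*\s F_i)^{\mathrm{ss}},\qquad P_i^-\simeq\bigoplus_{n\text{ odd}}\leftexp{p}{\s H^n}(Rf_*\s F_i)^{\mathrm{ss}}.
\]

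Finally, I would write $P_i^+\oplus P_i^-\simeq\bigoplus_{\alpha,\beta}IC_{\ol{Y}_\alpha}(M^i_{\alpha\beta})$, with $\{M^i_{\alpha\beta}\}_I$ an $(E'',I'')$-compatible family of irreducible lisse sheaves on a common maximal support $Y_\alpha$ (after a further extension, as in the proof of Proposition \ref{L1}). By Lemma \ref{L-poids}, the $\iota_i$-weight of $M^i_{\alpha\beta}$ is independent of $i$, so each IC summand appears in $\leftexp{p}{\s H^n}(Rf_*\s F_i)^{\mathrm{ss}}$ for exactly the integer $n$ satisfying $w+n=(\text{weight of }M^i_{\alpha\beta})+\dim Y_\alpha$, a value independent of $i$. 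This yields strongly perverse $(E'',I'')$-compatibility of $\{Rf_*\s F_i\}_I$, as desired. There is no substantive new obstacle: the split hypothesis on $\s F_i$ intervenes only through purity (which transfers to $Rf_*\s F_i$), and the main technical point is simply to recognize that the weight-separation step at the end of the proof of Theorem \ref{T1} requires only purity of the pushforward, not the IC structure of the input.
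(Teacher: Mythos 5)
Your proof is correct, but it follows a genuinely different route from the paper's own proof of this statement: you transplant the weight-separation argument from the proof of Theorem \ref{T1} (trace formula for weak compatibility of $\{Rf_*\s F_i\}_I$, Proposition \ref{L2} for the canonical representatives $P_i^{\pm}$, purity of $Rf_*\s F_i$ via \cite[Proposition 6.2.6]{Del2} and \cite[Corollaire 5.4.4]{BBD}, then no cancellation between even and odd perverse degrees and Lemma \ref{L-poids} to sort the $IC_{\ol{Y}_\alpha}(M^i_{\alpha\beta})$ into the correct degree $n$ independently of $i$). The paper instead argues more quickly by citing external results: \cite[Theorem 2]{Fuj} together with \cite[Corollary 3.2.5]{Sun-Zheng} to conclude that $\{Rf_*\s F_i\}_I$ is a weakly compatible system of \emph{split} pure complexes, then \cite[Corollaire 5.4.4]{BBD} and the weight-separation statement \cite[Proposition 2.7]{Zheng2} to get weak compatibility of each $\{\leftexp{p}{R^n}f_*\s F_i\}_I$, and finally Proposition \ref{L1}. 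The trade-off: the paper's proof is shorter and uses the splitness hypothesis in an essential-looking way (through the Sun--Zheng splitting of the pushforward), whereas your argument stays entirely within the machinery already developed in the paper (Propositions \ref{L1}--\ref{L2}, Lemma \ref{L-poids}) and, as you correctly observe, never actually uses splitness of the $\s F_i$ --- so it proves the formally stronger statement that weak $(E,I)$-compatibility plus purity of weight $w$ already suffices, with splitness intervening only if one wants to quote the shorter chain of citations.
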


\begin{proof}
By \cite[Theorem 2]{Fuj} and \cite[Corollary 3.2.5]{Sun-Zheng}, the system $\{Rf_*\s F_i\}_I$ is a weakly $(E,I)$-compatible system of split complexes, pure of weight $w$. By \cite[Corollaire 5.4.4]{BBD}, each $\leftexp{p}{R^n}f_*\s F_i$ is pure of weight $n+w$, so by \cite[Proposition 2.7]{Zheng2}, they are weakly $(E,I)$-compatible, hence perverse $(E,I)$-compatible, by Proposition \ref{L1}.
\end{proof}

(ii) Weizhe Zheng has generalized Theorem \ref{Drinfeld} to Artin stacks with affine stabilizers \cite{Zheng}, as well as \cite[Theorem 3]{Fuj} to Artin stacks (see the end of the article \cite{Zheng2} for the claim; as for the proof, combine \cite[Proposition 5.7]{Zheng2} and \cite[Lemma 6.2]{LO3} to reduce to the case of schemes),
and the author has generalized Deligne's purity theorem for proper pushforwards to such stacks \cite[Proposition 3.9 (iii)]{Sun2}.
Therefore, one can generalize Theorem \ref{T1} (as well as Theorem \ref{T2}) to such stacks as well, and the proof is verbatim:

\vskip.4truecm

\textit{Let $f:X\to Y$ be a proper morphism with finite diagonal between $\bb F_q$-Artin stacks with affine stabilizers. Let $\{\s F_i\}_I$ be a strongly perverse $(E,I)$-compatible system
of perverse semisimple mixed complexes on $X$. Then $\{Rf_*\s F_i\}_I$ is strongly perverse $(E,I)$-compatible.}

\end{remark}

\appendix

\section{Erratum}

Correction to the proof of \cite[Theorem 3.9]{Sun1}.

Yifeng Liu and Weizhe Zheng pointed out that Lemma 5.6.1, and hence the ``K\"unneth formula", Theorem 5.6.5, in \cite{LO1},
are incorrect as stated (even for schemes), and informed me that the proof of \cite[Theorem 3.9]{Sun1}, which cited the K\"unneth formula above on page 82, should be fixed.

Nevertheless, the K\"unneth formula is still valid in the situation therein. In fact, we had reduced to the case where, in the notation in \textit{loc.\ cit.}, $f_1:G\to Y$ is a smooth morphism, and in this case, the K\"unneth formula follows from smooth base change and the projection formula.


\begin{thebibliography}{AAA}



\bibitem[1]{BBD} {\scshape Alexander A. Beilinson, Joseph
Bernstein, Pierre Deligne}, \emph{Faisceaux pervers}, Ast\'erisque, 100, 5-171, Soc. Math. France, Paris (1982).

\bibitem[2]{Chev} {\scshape Claude Chevalley}, \emph{Th\'eorie des groupes de Lie. Tome III. Th\'eor\`emes g\'en\'eraux sur les alg\`ebres de Lie}, Actualit\'es Sci. Ind. no. 1226,
Hermann \& Cie, Paris, 1955.

\bibitem[3]{Chin02} {\scshape Cheewhye Chin}, \emph{Independence of $\ell$ in Lafforgue's Theorem}, Adv.\ Math.\ 180 (2003), no.\ 1, 64-86.

\bibitem[4]{Chin} {\scshape Cheewhye Chin}, \emph{Independence of $\ell$ of monodromy groups}, Journal of the AMS, 17:3 (2004), 723-747.


\bibitem[5]{Del2} {\scshape Pierre Deligne}, \emph{La conjecture de Weil: II}, Publications Math\'ematiques de l'I.H.\'E.S., 52 (1980), 137-252.

\bibitem[6]{FrobTr} {\scshape Pierre Deligne}, \emph{Finitude de l'extension de $\bb Q$ engendr\'ee par des traces de Frobenius, en caract\'eristique finie}, Moscow Math.\ J., 12:3 (2012), 497-514.

\bibitem[7]{Dri} {\scshape Vladimir Drinfeld}, \emph{On a
conjecture of Deligne}, Moscow Math.\ J., 12:3 (2012), 515-542.

\bibitem[8]{Fuj} {\scshape Kazuhiro Fujiwara},
\emph{Independence of $\ell$ for Intersection Cohomology
\emph{(}after Gabber\emph{)}}, Advanced Studies in Pure Mathematics 36 (2002), Algebraic Geometry 2000, Azumino, 145-151.

\bibitem[9]{Ill} {\scshape Luc Illusie}, \emph{Constructibilit\'e g\'en\'erique et uniformit\'e en $\ell$}, preprint, available at www.math.u-psud.fr/$\sim$illusie.


\bibitem[10]{LO1} {\scshape Yves Laszlo, Martin Olsson},
\emph{The Six Operations for Sheaves on Artin Stacks I:
Finite coefficients}, Publications Math\'ematiques de
l'I.H.\'E.S., 107, No. 1 (June 2008), p.109-168.


\bibitem[11]{LO3} {\scshape Yves Laszlo, Martin Olsson},
\emph{Perverse sheaves on Artin stacks}, Math.\ Zeit.\ 261,
(2009), 737-748.

\bibitem[12]{Lau} {\scshape G\'erard Laumon}, \emph{Transformation de Fourier, constantes d'\'equations fonctionnelles et conjecture de Weil}, Publications Math\'ematiques de l'I.H.\'E.S., 65 (1987), 131-210.


\bibitem[13]{Serre} {\scshape Jean-Pierre Serre}, \emph{Abelian $l$-Adic Representations and Elliptic Curves}, Addison-Wesley Publ. Company (1968).

\bibitem[14]{Sun1} {\scshape Shenghao Sun}, \emph{$L$-series of Artin stacks over finite fields}, Algebra and Number Theory, 6:1 (2012), 47-122.

\bibitem[15]{Sun2} {\scshape Shenghao Sun}, \emph{Decomposition theorem for perverse sheaves on Artin stacks over finite fields}, Duke Math.\ J., 161:12 (2012), 2297-2310.

\bibitem[16]{Sun-Zheng} {\scshape Shenghao Sun, Weizhe Zheng}, \emph{Parity and symmetry in intersection and ordinary cohomology}, Algebra and Number Theory, 10:2 (2016), 235-307.


\bibitem[17]{Zheng2} {\scshape Weizhe Zheng}, \emph{Sur l'ind\'ependance de l en cohomologie l-adique sur les corps locaux}, Annales Scientifiques de l'\'Ecole Normale Sup\'erieure, quatri\`eme s\'erie - tome 42, fascicule 2, mars-avril 2009.

\bibitem[18]{Zheng} {\scshape Weizhe Zheng}, \emph{Companions on Artin stacks}, preprint, available at http://arxiv.org/abs/1512.08929.

\end{thebibliography}
\end{document}